\newcommand{\zz}{\ensuremath{\mathbb{Z}}}
\newcommand{\qq}{\ensuremath{\mathbb{Q}}}
\newcommand{\cc}{\ensuremath{\mathbb{C}}}
\theoremstyle{plain}
\newtheorem{Th}{Theorem}[section]
\newtheorem{Lemma}[Th]{Lemma}
\newtheorem{Cor}[Th]{Corollary}
\newtheorem{Prop}[Th]{Proposition}
\theoremstyle{definition}
\newtheorem{Rem}[Th]{Remark}
\newtheorem{?}[Th]{Problem}
\newtheorem{Alg}[Th]{Algorithm}
\begin{document}
	
	\title{Divisors in Residue Classes Revisited}

	\author{Jonathon Hales}
	
	\address{ }
	
	\email{ hales.41@osu.edu}
	
	\keywords{} 
	\maketitle
	\begin{abstract}
		In 1984, H. W. Lenstra described an algorithm finding divisors of $N$ congruent to $r \mod S$. When $S^3 > N$, this algorithm runs in polynomial time and hence factors $N$ in time $N^{1/3+o(1)}$. Lenstra's algorithm relies on a sign change in a constructed sequence and so cannot be adapted directly to larger euclidean number rings. We present a new method that generalizes to a larger class of euclidean rings and the polynomial ring $\zz[x]$. The algorithm is implemented and timed confirming its polynomial run time.
	\end{abstract}
	\section{Introduction}\label{intro}
	In 1984, H. W. Lenstra in \cite{DivsInRes} described an algorithm that finds all the divisors of an integer $N$ congruent to $r \mod S$ with $(N,S) = (S,r) = 1$. This can be rephrased as finding all solutions in nonnegative integers $x,y$ to the equation \begin{equation*}
	(Sx+r)(Sy+r') = N.
	\end{equation*}
	When $S^3 > N$, his algorithm runs in polynomial time. 
	
	Lenstra's method relies critically on a sign change in a sequence of terms of the form $a_ix +b_iy$ for some integers $a_i,b_i$ and so is not immediately adaptable to larger number rings in which there is no natural ordering of the terms. We show how this can be done for imaginary quadratic Euclidean domains in Section \ref{ImagQuadSec} and prove the following.
	\begin{Th}\label{ImagQuadTh}
		Let $K = \qq(\sqrt{d})$ for $d = -1,-2,-3,-7,$ or $-11$, and $\mathcal{O}_K$ its ring of integers. If $N,S,r \in \mathcal{O}_K$ such that $\gcd(N,S) = (S,r) = 1$. Then there exists an algorithm which outputs the divisors $d$ of $N$ of the form $d = Sx+r$ for some $x \in \mathcal{O}_K$. When $|S|^3 > |N|$ the algorithm runs 
		$
		O\left( \log(|N|)\right)
		$
		Elementary operations in $\mathcal{O}_K$.
	\end{Th}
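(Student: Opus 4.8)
The plan is to recast the problem as a single bilinear equation together with a congruence, and then to replace Lenstra's sign-change localization by the reduction of a rank-two $\mathcal{O}_K$-lattice, exploiting that $\mathcal{O}_K$ is norm-Euclidean for exactly the listed values $d \in \{-1,-2,-3,-7,-11\}$. First I would normalize the data: replace $r$ by its reduced residue modulo $S$ so that $|r| < |S|$, and compute $r' \in \mathcal{O}_K$ with $r r' \equiv N \pmod S$, which exists since $\gcd(S,r)=1$ makes $r$ invertible modulo $S$. Any divisor $d = Sx + r$ of $N$ has cofactor $N/d = Sy + r'$, so the divisors correspond to pairs $(x,y) \in \mathcal{O}_K^2$ solving
\begin{equation*}
(Sx + r)(Sy + r') = N.
\end{equation*}
Because $S \mid N - r r'$, setting $M = (N - r r')/S$ this is equivalent to the bilinear equation $S xy + r' x + r y = M$. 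Swapping $d$ with its cofactor interchanges the roles of $(x,r)$ and $(y,r')$, so it suffices to find the solutions with $|Sx+r| \le |N|^{1/2}$ and, symmetrically, those with $|Sy+r'| \le |N|^{1/2}$; since one of the two holds for every pair, together they recover every divisor.

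Next I would isolate the lattice structure. Reducing the bilinear equation modulo $S$ gives $r' x + r y \equiv M \pmod S$, so every solution lies in one coset $\mathbf{c} + \Lambda_0$ of the lattice
\begin{equation*}
\Lambda_0 = \{\, (a,b) \in \mathcal{O}_K^2 : r' a + r b \equiv 0 \pmod S \,\},
\end{equation*}
which has index $|S|^2$ in $\mathcal{O}_K^2$ and covolume on the order of $|S|^2$. The size constraint $|Sx+r| \le |N|^{1/2}$ together with $|r| < |S|$ and $|N| < |S|^3$ forces $x$ into the disk $|x| \le |N|^{1/2}/|S| < |S|^{1/2}$. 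I would then run Gaussian reduction on $\Lambda_0$ — the two-dimensional analogue of the Euclidean algorithm, available precisely because $\mathcal{O}_K$ is norm-Euclidean — to produce a reduced basis in $O(\log|N|)$ elementary operations in $\mathcal{O}_K$. The reduced basis, read against the confining disk for $x$, is what the algorithm uses to list candidate solutions.

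The crux — and the step replacing the missing ordering — is a separation lemma bounding the number of admissible solutions by $O(1)$ and showing they are captured by the reduced basis. For two solutions $(x_1,y_1)$ and $(x_2,y_2)$, subtracting the bilinear equation and writing $d_i = Sx_i + r$, $e_i = Sy_i + r'$ yields the exact identity
\begin{equation*}
(x_1 - x_2)\, e_2 + (y_1 - y_2)\, d_1 = 0,
\end{equation*}
so the difference vector lies in $\Lambda_0$ and is pinned by the current factorization. Feeding in $|d_i| \le |N|^{1/2} \le |e_i|$ and the hypothesis $|S|^3 > |N|$, I would argue that distinct small solutions force their difference vectors to be long relative to the covolume of $\Lambda_0$, so that only $O(1)$ of them can satisfy both the congruence and the size bound, each being an $O(1)$-bounded integer combination of the reduced basis vectors. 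This counting step is the main obstacle: in $\zz$ Lenstra controls it through the monotonicity of $d \mapsto N/d$ along the hyperbola and a single sign change, whereas over $\mathcal{O}_K \subset \cc$ there is no order, so the bound must be extracted purely from the lattice geometry and the norm identity above — and it is exactly here that the hypothesis $|S|^3 > |N|$ is consumed.

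Finally I would assemble the algorithm and tally its cost. After the Gaussian reduction, enumerate the $O(1)$ lattice combinations meeting the disk bound, form the corresponding $d = Sx + r$, test $d \mid N$ by one division with remainder, and output the successful $d$ together with their cofactors, repeating in the symmetric orientation to capture the large divisors. The only superconstant cost is the lattice reduction, which like the Euclidean algorithm terminates in $O(\log|N|)$ elementary operations in $\mathcal{O}_K$; computing $r'$ by the extended Euclidean algorithm costs the same, while the $O(1)$ enumerations and divisibility tests cost $O(1)$ operations each. Summing gives the claimed bound of $O(\log|N|)$ elementary operations in $\mathcal{O}_K$.
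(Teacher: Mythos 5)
Your setup (normalizing $r$, constructing $r'$ with $rr' \equiv N \bmod S$, passing to the bilinear equation and the congruence $r'x + ry \equiv M \bmod S$) matches the paper's, and your instinct that Euclidean/lattice structure must replace Lenstra's sign change is exactly right. But the crux of your argument --- the ``separation lemma'' asserting that every admissible solution is an $O(1)$-bounded integer combination of a \emph{single} reduced basis of $\Lambda_0$, so that an $O(1)$ enumeration is both complete and fast --- is not only unproven (you flag it yourself as the main obstacle), it is false as stated. The solution region is hyperbolic, not round: the paper shows that nonzero solutions satisfy $|x|,|y| \le 5|S|$ and $|xy| < 9|S|$, so a legitimate solution can have $|y|$ of order $|S|$ while $|x|$ is of order $1$ (or vice versa). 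Meanwhile $\Lambda_0$ is generated by $(1,-a_1)$ and $(0,S)$ with covolume of order $|S|^2$, so a reduced basis generically consists of two vectors of size about $|S|^{1/2}$; expressing such an eccentric solution in that basis requires coefficients of size about $|S|^{1/2}$, not $O(1)$, and your enumeration would miss those divisors. Nor can completeness be restored by enlarging the search region: the number of coset points of $\Lambda_0$ in the admissible box is of order $|S|$ or more, so lattice geometry alone never gets down to $O(1)$ --- one must exploit the product equation itself, and that is precisely the step you have not supplied.

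The paper's resolution is to use not one reduced basis but the entire chain of $O(\log|S|)$ Euclidean remainder pairs: $a_0 = S$, $a_1 \equiv r'r^{-1}$, $b_0 = 0$, $b_1 = 1$, $a_{k+1} = a_{k-1} - q_k a_k$, $b_{k+1} = b_{k-1} - q_k b_k$, which satisfy $a_k x + b_k y \equiv c_k \bmod S$, $a_k b_{k+1} - a_{k+1} b_k = (-1)^k S$, and $|a_k b_{k+1}| \le \left(2 - 2^{-k}\right)|S|$ (Lemma \ref{ABLemma}). The critical index is chosen \emph{adaptively to the unknown solution}: picking $k$ with $5|a_k| \le |y| < 5|a_{k-1}|$ balances $|a_k x| \le \frac{1}{5}|xy|$ against $|b_k y| \le 5|a_{k-1} b_k|$ and yields $|a_k x + b_k y| < 12|S|$ (Lemma \ref{SmallTerm}); this is the multi-scale substitute for your single basis, with one linear form per scale of the eccentricity. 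Since the right $k$ is unknown, the algorithm simply tries all $O(\log|S|)$ of them; for each, the congruence confines $a_k x + b_k y$ to $O(1)$ explicit values $\gamma$ (at most $24^2$ residues fit in a disk of radius $12|S|$), and each pair $a_k x + b_k y = \gamma$, $(Sx+r)(Sy+r') = N$ is a quadratic system solved in $O(1)$ operations, with the cases $x = 0$ or $y = 0$ handled separately at the start. That chain construction, the adaptive choice of index, and the bounds of Lemmas \ref{XYLemma} and \ref{SmallTerm} (where $|S|^3 > |N|$ is actually consumed) are the missing content of your proposal.
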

	\begin{Rem}
		Elementary operations refers to additions, multiplications, subtractions, integer divisions, and square roots extractions.
		The values of $d$ referenced in Theorem \ref{ImagQuadTh} constitute a complete list of negative integers such that $\mathcal{O}_K$ is a Euclidean domain with respect to the complex absolute value. 
	\end{Rem}
	We then show in Section \ref{PolySec} how a slight modification of the algorithm referenced in Theorem $\ref{ImagQuadTh}$ can be adapted to finding divisors in $\zz[x]$ and prove the following. 
	\begin{Th}\label{PolyTh}
		If $N,S,r \in \zz[x]$ are monic polynomials with $\gcd(N,S) = \gcd(S,r) = 1$ and $3\cdot\deg(S)\geq \deg(N)$, then there exists an algorithm which outputs the divisors $d$ of $N$ of the form $d = Sf+r$ for some $f \in \zz[x]$. It runs in $O\left(\deg(N)\right)$ elementary operations in $\zz[x]$.
	\end{Th}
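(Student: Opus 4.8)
The plan is to mirror the imaginary-quadratic algorithm of Theorem \ref{ImagQuadTh}, with the complex absolute value replaced by the degree valuation $\deg(\cdot)$ on $\zz[x]$. The first step is to normalize the problem. Since $\gcd(S,r)=1$ the class of $r$ is invertible modulo $S$ (over $\qq[x]$, with monicity of $S$ letting the monic division algorithm descend to $\zz[x]$ by Gauss's lemma, even though $\zz[x]$ is not a PID), so the cofactor residue $r'$ characterized by $rr' \equiv N \pmod S$ is well defined; I would take $r,r'$ to be the representatives of degree less than $\deg S$. Every divisor $d = Sf + r$ of $N$ then pairs with its cofactor $e = N/d$, which automatically satisfies $e \equiv r' \pmod S$ because $de = N \equiv rr' \pmod S$. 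Hence it suffices to enumerate the solutions of $(Sx+r)(Sy+r') = N$ with $x,y \in \zz[x]$.

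Second, I would exploit the degree hypothesis. Expanding the product gives the exact identity $S^2 xy + S(r'x + ry) + rr' = N$, which reduces modulo $S^2$ to the linear congruence $r'x + ry \equiv M \pmod S$, where $M := (N - rr')/S$. For any factorization in which both $d$ and $e$ have degree at least $\deg S$ one has $\deg x + \deg y = \deg N - 2\deg S \le \deg S$, by the assumption $3\deg S \ge \deg N$; the finitely many boundary factorizations (those with $f=0$, i.e.\ $d=r$, and the symmetric case) are checked directly. Thus every sought solution is a short vector, of degree at most $\deg S$, in the coset of the rank-two $\zz[x]$-lattice $\Lambda = \{(x,y) : r'x + ry \equiv 0 \pmod S\}$ cut out by the congruence.

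Third, I would run the polynomial continued-fraction (Euclidean) expansion attached to $\Lambda$, computing the partial quotients by monic polynomial division; because the degree valuation is non-archimedean there are no carries and the reduction is exact, which is precisely what lets the argument dispense with Lenstra's ordering. The role played by the sign change in the integer sequence is taken over by a degree threshold: I would track the degrees of the successive remainders, single out the convergents at which this degree crosses $\deg S$, argue that they produce all candidate pairs $(x,y)$ of degree at most $\deg S$, and then test each candidate by dividing $N$ by $Sx+r$. Since each partial quotient lowers the degree by at least one, the expansion terminates in $O(\deg S) = O(\deg N)$ steps, each a single elementary operation in $\zz[x]$, and only $O(1)$ candidates reach the verification stage.

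The main obstacle I expect is the detection step: proving that the degree-crossing criterion is simultaneously necessary and sufficient, namely that every genuine divisor appears among the flagged convergents while the flagged list stays of bounded length. Concretely I must control how the degrees of the continued-fraction remainders interlace with $\deg S$ and rule out a skewed reduced basis of $\Lambda$ yielding spuriously many short coset representatives, since the congruence alone is only necessary, not sufficient, for lying on the hyperbola. This is where the hypothesis $3\deg S \ge \deg N$ and the strong (ultrametric) triangle inequality for $\deg$ must be combined to reconstruct, over $\zz[x]$, the control that the monotone sign argument supplied over $\zz$.
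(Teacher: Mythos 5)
Your skeleton matches the paper's proof: reduce to $(Sf+r)(Sg+r')=N$, extract the linear congruence $r'f+rg \equiv (N-rr')/S \pmod{S}$, run the Euclidean expansion to get sequences $a_k,b_k,c_k$ with $a_kf+b_kg\equiv c_k \pmod{S}$, bound $\deg f+\deg g\le \deg S$ from $3\deg S\ge \deg N$ (your second paragraph is exactly Lemma \ref{XYtermPoly}), and use the ultrametric inequality together with the determinant identity $a_kb_{k+1}-a_{k+1}b_k=(-1)^kS$ (which forces $\deg(a_kb_{k+1})=\deg S$) to find an index $k$, namely one with $\deg a_{k+1}\le \deg g\le \deg a_k$, at which $\deg(a_kf+b_kg)\le\deg S$. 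Up to there your plan is sound and coincides with Lemmas \ref{EuclidDivZx}--\ref{SmallTermPoly}.

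The genuine gap is exactly the step you flag as ``the main obstacle'' and leave open: converting the congruence plus degree bound into an explicit, finite candidate list. The bound obtained is $\deg(a_kf+b_kg)\le\deg S$, not strict, and it cannot be made strict ($\deg(a_kb_{k+1})=\deg S$ exactly, and $\deg(fg)=\deg S$ whenever $\deg N=3\deg S$), so the congruence determines $a_kf+b_kg$ only up to $a_kf+b_kg=c_k+pS$ with $p$ an unknown \emph{constant}. Worse, $p$ a priori ranges over $\qq$: your appeal to Gauss's lemma fails here, because while reduction mod the monic $S$ stays in $\zz[x]$, inversion mod $S$ does not (e.g.\ $x^{-1}\equiv -x/2 \pmod{x^2+2}$), so $a_k,b_k,c_k$ genuinely live in $\qq[x]$. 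Your proposed remedy --- ruling out ``spuriously many short coset representatives'' of the lattice $\Lambda$ --- is not how this closes, and no lattice-geometry bound is needed. The paper's resolution, which you never invoke and which is where monicity actually enters, is a leading-coefficient argument: $(Sf+r)(Sg+r')=N$ forces $\ell(f)\ell(g)=\ell(N)/\ell(S)^2=1$, so $\ell(f)=d\in\{\pm1\}$, and comparing coefficients of $x^{\deg S}$ in $a_kf+b_kg=c_k+pS$ pins down $p=\bigl(\ell(a_k)d+\ell(b_k)d^{-1}\bigr)/\ell(S)$. Thus each convergent contributes at most a few explicit linear equations $a_kf+b_kg=\gamma$, each intersected with the quadratic $(Sf+r)(Sg+r')=N$ and solved; correctness needs only that the true solution appear among these $O(\deg N)$ candidate systems (every convergent must be tested, since the critical $k$ depends on the unknown $\deg g$), not that each convergent's list be free of spurious members. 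Without this determination of $p$, your flagged list is not finite and the argument does not close.
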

	\begin{Rem}
		Elementary operations again refers to additions, multiplications, subtractions, integer divisions, and square roots extractions. 
		If the polynomial $N$ is not monic, a version of Theorem \ref{PolyTh} still exists but the factorization of the leading term of $N$ must be known. 
		Otherwise, for some integer $n = pq$ we could recover $p$ and $q$ in polynomial time by thinking of $n$ as a degree zero polynomial and searching for all of the divisors congruent to $0 \mod 1$. 
	\end{Rem}
	In Section \ref{BoundsSec} we discuss Lenstra's combinatorial model for counting the number of divisors an integer can have in a given residue class, and comment on how this model can be adapted to the case of larger number rings.
	We then conclude and highlight some areas of future work in Section \ref{conc}.
	\section{The Algorithm in $\qq\left(\sqrt{d}\right)$}\label{ImagQuadSec}
	The majority of this section is devoted to carefully detailing the modified Lenstra algorithm in $\zz[i]$. The ideas behind this modification generalize easily to the other cases in Theorem \ref{ImagQuadTh} and we prove the appropriate lemmas. We will begin with a detailed description of Lenstra's original method and conclude with a brief discussion on the difficulties of adapting this algorithm to other number rings which still admit a Euclidean algorithm, but that our theorem does not apply to (like $\zz[\sqrt{2}]$ for example).
	\subsection{Lenstra's Method}
	The key idea behind Lenstra's method is to combine the equation
	\begin{equation}\label{ProductForm}
	(Sx+r)(Sy+r') = N
	\end{equation}
	with a sequence of congruence conditions of the form
	\[
	a_ix + b_iy \equiv c_i \mod S.
	\]
	If one can show that the absolute value of $|a_ix + b_iy|$ is small relative to $S$, then the congruence can be converted to a finite number of \emph{equalities}. 
	The resulting equalities can be combined with Equation $\left(\ref{ProductForm}\right)$ to solve for $x$ and $y$. 
	
	Specifically, assuming $x,y$ satisfy Equation $\left(\ref{ProductForm}\right)$, then expanding and looking modulo $S$ gives
	\[
	a_0x +b_0y \equiv c_0 \mod S \quad \quad \text{and}\quad \quad a_1x+b_1y \equiv c_1 \mod S
	\]
	where $a_0 = S$,$a_1 = r'$,$b_0 = 0$,$b_1 = r$ and $c_0 = 0$,$c_1 \equiv (N-rr')/S$. 
	From these initial conditions we can chain these congruences into a larger sequence by observing the following. For any integer $q$, we have 
	\begin{align*}
	(a_0-qa_{1})x + (b_0-qb_{1})y &\equiv c_0-qc_1 \mod S. 
	\end{align*}
	Choosing $q$ via the division algorithm gives a value $a_2 := a_0 -q a_{1}$ so that $a_2 < a_1$. This process is iterated, defining via the division algorithm integers $q_i$ so that 
	\[
	a_i = a_{i-2} - q_{i}a_{i-1}
	\]
	with $0\leq a_i < a_{i-1}$ for $i$ even and $0 < a_i \leq a_{i-1}$ for $i$ odd. This is to ensure that the sequence terminates (i.e. $a_i = 0$) at an even index $i$. 
	As before, to preserve the relation $a_ix +b_iy \equiv c_i \mod S$, the values $b_i,c_i$ are defined as 
	\[
	b_i = b_{i-2}-q_ib_{i-1}\quad \quad \quad c_i \equiv c_{i-2}-q_ic_{i-1}\mod S.
	\]
	
	By induction the values of $b_i$ are nonnegative for $i$ odd and nonpositive for $i$ even. Lenstra was able to use this sign changing to show that there exists an even index $i$ so that $a_ix+b_iy \geq 0$ and $a_{i+2}x+b_{i+2}y\leq 0$, and that one of the terms $a_{j}x+b_{i}y$ with $j = i,i+1$ or $i+2$ must satisfy
	\[
	-S < a_jx + b_jy < S
	\]
	for $j$ even and 
	\[
	2a_jb_j < a_jx+b_jy < a_jb_j + N/S^2
	\]
	for $j$ odd. 
	If $S^3 > N$ then in either case the additional information that $a_jx+b_jy \equiv c_j$ reduces the possible values of $a_jx+b_jy$ to just $1$ or $2$ possible values. 
	
	Finally, $x$ and $y$ are recovered by solving the resulting quadratic equation $(Sx+r)(Sy+r') = N$ given the value of $xa_j+yb_j$ where $j$ is the critical index where the sign change occurs. Since $j$ is unknown a priori (since $x$ and $y$ are unknown), each of the $O(\log(S))$ terms is tested. In total this gives a $O(\log(N)^{2+\epsilon})$ for any $\epsilon > 0$ for determining all positive divisors of $N$ congruent to $r \mod S$.
	\subsection{The Algorithm in $\zz[i]$}
	There are two main difficulties in adapting Lenstra's algorithm to larger number rings. First, Lenstra uses the positivity of $x,y$ in several key places to obtain his even and odd term inequalities. Second, there is no clear generalization of the sign change in Lenstra's algorithm to help us restrict the terms $a_ix +b_i y$ to a small set of known residue systems. 
	
	Our contribution is a new method that does not rely on a sign change argument to bound $|a_ix+b_iy|$ by an explicit constant times $|S|$. This allows us to limit our search for solutions to equation $\left(\ref{ProductForm}\right)$ to just $O(\log(|S|))$ candidates we check individually.
	Also of note, since this method bounds the terms $a_ix+b_iy$ in absolute value, all integer solutions to equation $\left(\ref{ProductForm}\right)$ are found, not just those restricted to a particular quadrant (or sign in the rational integer case).
	
	We begin with a brief reminder of how Euclidean division works in $\zz[i]$. 
	\begin{Lemma}\label{EuclidDivZi}
		Let $\alpha = a_0+a_1i, \beta = b_0+b_1i \in \zz[i]$ with $\beta \neq 0$. Then there exists $q,r \in \zz[i]$ such that $\alpha = q\beta +r$ with $|r| \leq \frac{1}{\sqrt{2}}|\beta|$.
	\end{Lemma}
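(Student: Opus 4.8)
The plan is to work inside the field $\qq(i) \subseteq \cc$ and to exploit the fact that $\zz[i]$ embeds in $\cc$ as the square lattice of points with integer real and imaginary parts. First I would form the exact quotient $\alpha/\beta \in \qq(i)$, which is well defined since $\beta \neq 0$, and write it as $u + vi$ with $u,v \in \qq$. The key idea is that Euclidean division is nothing more than approximating this complex number by a nearby lattice point, and that the quality of the remainder is governed by how far $\alpha/\beta$ can be from the lattice $\zz[i]$.

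Next I would round each coordinate independently: choose $m,n \in \zz$ with $|u-m| \leq \tfrac{1}{2}$ and $|v-n| \leq \tfrac{1}{2}$, which is always possible because every rational (indeed every real) number lies within distance $\tfrac{1}{2}$ of some integer. Setting $q = m+ni \in \zz[i]$ and $r = \alpha - q\beta$, the remainder $r$ lies in $\zz[i]$ since both $\alpha$ and $q\beta$ do, and the identity $\alpha = q\beta + r$ holds automatically.

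It then remains only to bound $|r|$. I would compute $r/\beta = \alpha/\beta - q = (u-m) + (v-n)i$ and pass to complex absolute values, obtaining $|r/\beta|^2 = (u-m)^2 + (v-n)^2 \leq \tfrac{1}{4} + \tfrac{1}{4} = \tfrac{1}{2}$. Multiplying through by $|\beta|$ yields $|r| \leq \tfrac{1}{\sqrt{2}}|\beta|$, which is the claimed bound.

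There is no genuine obstacle here; the only points demanding care are interpreting $|\cdot|$ as the complex (archimedean) absolute value throughout, and observing that the extremal case occurs when both coordinate errors equal exactly $\tfrac{1}{2}$, which is what pins the constant at $\tfrac{1}{\sqrt{2}}$ rather than anything smaller. This "round each coordinate" strategy is precisely what will have to be adapted for the remaining imaginary quadratic rings appearing in Theorem \ref{ImagQuadTh}, where the relevant lattice is no longer the square lattice $\zz[i]$ and the covering radius, and hence the analogue of the constant $\tfrac{1}{\sqrt{2}}$, must be recomputed.
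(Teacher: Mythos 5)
Your proposal is correct and is essentially identical to the paper's own proof: both compute $\alpha/\beta \in \qq(i)$, round its real and imaginary parts to the nearest integers to form $q$, and bound $|\alpha/\beta - q|^2 \leq \tfrac{1}{4}+\tfrac{1}{4}$ to get $|r| \leq \tfrac{1}{\sqrt{2}}|\beta|$. The only cosmetic difference is that the paper writes out the real and imaginary parts of $\alpha/\beta$ explicitly via multiplication by the conjugate, whereas you leave them as $u,v \in \qq$.
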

	\begin{proof}
		We write 
		\[
		\frac{\alpha}{\beta} = \frac{a_0+a_1i}{b_0+b_1i} = \frac{(a_0+a_1i)(b_0-b_1i)}{b_0^2+b_1^2} = \frac{a_0b_0+a_1b_1}{b_0^2+b_1^2} + \frac{a_1b_0-a_0b_1}{b_0^2+b_1^2}i.
		\]
		Pick the integers $c,d$ closest to $\frac{a_0b_0+a_1b_1}{b_0^2+b_1^2}$ and $\frac{a_1b_0-a_0b_1}{b_0^2+b_1^2}$ respectively (ties broken by rounding down) and set $q = c+di$. Then 
		\[
		\left| \frac{\alpha}{\beta}- q\right|^2 \leq \frac{1}{2^2}+\frac{1}{2^2} = \frac{1}{2}
		\]
		so that 
		\[
		\left|\alpha - \beta q\right| \leq \frac{1}{\sqrt{2}}\left|\beta\right|. \qedhere
		\]
	\end{proof}
	We can now present our modification of Lenstra's algorithm.
	\begin{Alg}\label{ZiAlg}
		\mbox{}
		
		\noindent	\textit{Inputs:}
		\begin{itemize}
			\item[-] Gaussian integers $S,N,r$ with $|S|^3 > |N|$, and $\gcd(N,S) = \gcd(S,r) = 1$.
		\end{itemize}
		\textit{Outputs:}
		\begin{itemize}
			\item[-] A complete list of Gaussian integers $d$ such that $d$ divides $N$, and $d \equiv r \mod S$.
		\end{itemize}
		\begin{algorithmic}[1]
			\STATE Set $r = r \mod S$, and initialize $a_0 = S, a_1 = r'r^{-1} \mod S$, $b_0 = 0, b_1 = 1$, $c_0 = 0, c_1 = \frac{N-rr'}{S}r^{-1} \mod S$. Finally, set $r' = Nr^{-1} \mod S$ and set $t = 1$.
			\STATE If $r | N$, add $r$ to the list of outputs. If $r'|N$, add $N/r'$ to the list of outputs.
			\WHILE{$|a_t| > 0$}
			\STATE Calculate $q_t,r_t$ via Lemma \ref{EuclidDivZi} so that $a_{t-1} = a_{t}q_{t} + r_t$ where $|r_t| \leq |a_{t}/\sqrt{2}|$.
			\STATE Set $t = t+1$, and $a_t = r_{t-1}$. In addition, set $b_t = b_{t-2} - q_{t-1} b_{t-1}$ and set $c_t = c_{t-2}-q_{t-1}c_{t-1} \mod S$.
			\ENDWHILE 
			\FOR{$i=1,\dots, t$}
			\STATE Calculate the Gaussian integers $[\gamma_1,\dots,\gamma_m]$ such that $\gamma_k \equiv c_i \mod S$ and $|\gamma_k| < 12|S|$ holds for all $k$.
			\FOR{$j = 1,\dots,m$}
			\STATE Solve the system $a_ix+b_iy = \gamma_j$ and $(Sx+r)(Sy+r') = N$ for $x,y$. If $x,y$ are elements of $\zz[i]$ add $d = Sx+r$ to the list of outputs.
			\ENDFOR
			\ENDFOR
			\STATE Return the list of outputs.
		\end{algorithmic}
	\end{Alg}
	\begin{Prop}\label{ZiAlgTh}
		Algorithm $\left(\ref{ZiAlg}\right)$ is correct, and runs in $O(\log|N|)$ elementary operations on Gaussian integers of absolute value $O(|N|)$. This proves Theorem $\left(\ref{ImagQuadTh}\right)$ when $d = -1$.
	\end{Prop}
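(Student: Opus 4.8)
The plan is to separate correctness from complexity, and for correctness to first record two invariants of the sequences built in the while loop. The \emph{congruence invariant} $a_ix+b_iy\equiv c_i \pmod S$ holds for every solution $(x,y)$ of \eqref{ProductForm}: the cases $i=0,1$ come from reducing \eqref{ProductForm} modulo $S$ (after clearing the factor $r^{-1}$ used to normalize the initial data), and the inductive step is immediate because $a_i,b_i,c_i$ all obey the same linear recurrence $z_i=z_{i-2}-q_{i-1}z_{i-1}$. The \emph{determinant invariant} $a_ib_{i-1}-a_{i-1}b_i=\pm S$ follows by the same induction from $a_1b_0-a_0b_1=-S$, since the recurrence flips the sign of the relevant $2\times 2$ determinant at each step. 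Combining this invariant with the geometric decay $|a_{i+1}|\le|a_i|/\sqrt2$ supplied by Lemma \ref{EuclidDivZi}, I would deduce the size bounds $|a_{i-1}||b_i|\le 2|S|$ (summing a geometric series) and hence $|a_i||b_i|\le\sqrt2|S|$.

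The heart of the proof is to show that for every divisor $d=Sx+r$ of $N$ with $x\neq 0$ and complementary factor $Sy+r'=N/d$ with $y\neq 0$ (the cases $x=0$ or $y=0$ being exactly the divisors $r$ and $N/r'$ that are split off on line 2), some index $i$ satisfies $|a_ix+b_iy|<12|S|$. I would set $f(i)=|a_i||x|$ and $g(i)=|b_i||y|$, noting that $f$ is strictly decreasing with $f(0)=|S||x|\ge|S|$ and $f(t)=0$, while $g$ increases from $g(0)=0$. Letting $i^{\ast}$ be the least index with $f(i^{\ast})\le g(i^{\ast})$, the bound $|a_{i^{\ast}-1}||b_{i^{\ast}}|\le 2|S|$ gives $f(i^{\ast}-1)\,g(i^{\ast})\le 2|S|\,|x||y|$, and the hypothesis $|S|^3>|N|$ controls $|x||y|$ through $|Sx+r||Sy+r'|=|N|$. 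Splitting on whether $f(i^{\ast}-1)\ge|S|$ then bounds either $|a_{i^{\ast}}x+b_{i^{\ast}}y|$ at the crossover index or $|a_{i^{\ast}-1}x+b_{i^{\ast}-1}y|$ at the preceding index by a constant multiple of $|S|$. I expect this to be the main obstacle: the triangle estimate $|a_ix+b_iy|\le f(i)+g(i)$ is lossy, so forcing the constant down to $12$ requires carefully tracking the size of $|x||y|$ in the unbalanced regime, where one of $|Sx+r|,|Sy+r'|$ is near its minimum $\approx(1-\tfrac1{\sqrt2})|S|$, and exploiting that a large Euclidean quotient $q_i$ produces a correspondingly large drop in $f$ so that the crossover cannot be ``skipped over'' by more than a bounded factor.

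Granting this key bound, correctness follows: at the good index $i$ the Gaussian integer $a_ix+b_iy$ is congruent to $c_i$ modulo $S$ and has modulus $<12|S|$, so it appears among the enumerated $\gamma_k$; the linear equation $a_ix+b_iy=\gamma_k$ together with \eqref{ProductForm} pins down $(x,y)$ after solving a single quadratic, and since the algorithm checks $x,y\in\zz[i]$ before reporting $d=Sx+r$, no spurious divisor is emitted, while testing every index and every $\gamma_k$ guarantees no divisor is missed. For the complexity, Lemma \ref{EuclidDivZi} forces $|a_t|$ to contract by at least $\sqrt2$ per iteration, so the while loop runs $O(\log|S|)=O(\log|N|)$ times. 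For each of these $O(\log|N|)$ indices the $\gamma_k$ are the points of the coset $c_i+S\zz[i]$ inside the disk of radius $12|S|$; as $S\zz[i]$ has covolume $|S|^2$ and the disk has area $144\pi|S|^2$, there are $O(1)$ of them, each handled in $O(1)$ elementary operations (one linear solve and one quadratic solve with a square-root extraction). The invariants keep every intermediate quantity of modulus $O(|N|)$ — the $a_i,b_i,c_i$ are $O(|S|)$ and the quadratic solve involves $N$ — yielding the claimed $O(\log|N|)$ elementary operations on Gaussian integers of absolute value $O(|N|)$, which establishes the case $d=-1$ of Theorem \ref{ImagQuadTh}.
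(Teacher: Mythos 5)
Your overall architecture (congruence invariant, determinant identity, the bound $|a_{i-1}b_i|\le 2|S|$, the covolume count $m=O(1)$, and the $O(\log|S|)$ loop analysis) matches the paper's proof. The genuine gap is exactly the step you flag as ``the main obstacle'': your crossover argument, as set up, does not deliver the constant $12$ that Algorithm \ref{ZiAlg} hard-codes in line 7, and without that constant correctness is unproven. Concretely, in your Case B (split at $f(i^{\ast}-1)\ge|S|$), the identity $g(i^{\ast})=|b_{i^{\ast}}||y|=\frac{|a_{i^{\ast}-1}b_{i^{\ast}}|\,|x||y|}{f(i^{\ast}-1)}$ together with $|a_{i^{\ast}-1}b_{i^{\ast}}|\le 2|S|$ gives only $g(i^{\ast})\le 2|xy|$, hence $|a_{i^{\ast}}x+b_{i^{\ast}}y|\le f(i^{\ast})+g(i^{\ast})\le 2g(i^{\ast})\le 4|xy|$; since the best bound obtainable from $|S|^3>|N|$ is $|xy|<9|S|$, you land at $36|S|$, and a solution whose small term has modulus in $[12|S|,36|S|)$ would be missed by the enumeration in line 7. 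Moreover, the bound on $|xy|$ is itself a lemma you never prove: the paper's Lemma \ref{XYLemma} derives $|x|,|y|\le 5|S|$ and $|xy|<9|S|$ from $|S|^3>|N|$ and the normalization $|r|,|r'|<|S|/\sqrt2$ guaranteed by the reduction in line 1, and your argument cannot close without these explicit constants.

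The paper's Lemma \ref{SmallTerm} avoids your case analysis entirely by placing the crossover elsewhere: not between $|a_i||x|$ and $|b_i||y|$, but between $|a_i|$ and $|y|/5$. Since $|y|\le 5|S|=5|a_0|$ and $a_t=0$, there is $k\ge 1$ with $5|a_k|\le |y|<5|a_{k-1}|$, and then both terms are bounded at the same index: $|a_kx|\le\frac{1}{5}|xy|\le\frac{9}{5}|S|$ and $|b_ky|\le 5|a_{k-1}b_k|\le 10|S|$, giving $|a_kx+b_ky|<12|S|$ with no splitting; note also that this $k$ is automatically $\ge 1$, whereas your witness index $i^{\ast}-1$ can be $0$, which lies outside the loop range of line 7. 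For what it is worth, your framework is salvageable without the ``large quotient $q_i$ forces a large drop'' machinery you propose (which is not needed and not how the paper proceeds): simply rebalance the split at $f(i^{\ast}-1)\ge 3\sqrt2\,|S|$, which yields $2\cdot 3\sqrt2\,|S|=6\sqrt2\,|S|$ in Case A and $\frac{4\cdot 9|S|}{3\sqrt2}=6\sqrt2\,|S|$ in Case B, both below $12|S|$. But as written, the decisive quantitative step is missing, and with it the proof of correctness. (A last small point: your assertion that $g(i)=|b_i||y|$ is increasing is not justified in $\zz[i]$, though it is also not needed, since the existence of a first index with $f\le g$ already follows from $f(0)>0=g(0)$ and $f(t)=0$.)
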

	\begin{proof}
		We first show correctness and begin with some properties of the sequences $a_k,b_k$, and possible solutions $x,y$. Our goal is to bound $|a_kx+b_ky|$ by $12|S|$ for some $k$ by arguing that there must be a term $k$ such that $|a_kx|$ and $|b_ky|$ are roughly the same size. Lemma \ref{ABLemma} shows us how to control the terms $a_k,b_k$, while Lemma \ref{XYLemma} will allow us to control $|x|,|y|$ and $|xy|$.
		\begin{Lemma}\label{ABLemma}
			For any $k\geq 0$ we have $$a_kb_{k+1}-a_{k+1}b_k =(-1)^k S$$ and $$|a_{k}b_{k+1}| \leq \left(2-\frac{1}{2^k}\right)|S|.$$
		\end{Lemma}
		\begin{proof}
			We argue inductively. By construction, $a_0b_1 - a_1b_0 = S - 0 = S$, and for any $k\geq 0$ we have that 
			\begin{align*}
			a_{k+1}b_{k+2} - a_{k+2}b_{k+1} & = a_{k+1}(b_k-q_{k+1}b_k) -(a_{k}-q_{k+1}a_{k+1})b_{k+1}\\
			&=a_{k+1}b_k - a_{k}b_{k+1}\\
			&= (-1)(-1)^{k}S = (-1)^{k+1}S
			\end{align*} where the penultimate equality follows by the inductive hypothesis. This proves the first claim.
			
			Next, for $k = 0$ we have $|a_kb_{k+1}| = |S| = \left(2-\frac{1}{2^k}\right)\cdot |S|$. For $k = 1$ we have that $a_1b_2 - a_2b_1 = -S$. Since $b_1 = 1$ and $|a_2| \leq |a_1|/\sqrt{2}\leq |S|/2$, we have 
			\[
			|a_1b_2| - \frac{|S|}{2}\leq |a_1b_2|-|a_2b_1| \leq |a_1b_2 - a_2b_1| = |S|
			\]
			so that $|a_1b_2| \leq \frac{3}{2}|S| = \left(2-\frac{1}{2^k}\right)|S|$.
			Finally, for any $k \geq 1$ we have that 
			\begin{align*}
			|S| = |a_{k}b_{k+1}-a_{k+1}b_{k}| &\geq |a_{k}b_{k+1}|-|a_{k+1}b_k|\\
			&\geq |a_{k}b_{k+1}| - \frac{1}{2}|a_{k-1}b_{k}| \geq |a_{k}b_{k+1}| - \left(1-\frac{1}{2^{k}}\right)|S|
			\end{align*}
			so that 
			\[
			\left( 2-\frac{1}{2^{k}}\right)|S| \geq |a_{k}b_{k+1}|. \qedhere
			\]
		\end{proof}
		\begin{Lemma}\label{XYLemma}
			For nonzero $x,y,r,r',S,N \in \zz[i]$, if $(Sx+r)(Sy+r') = N$ with $|r|,|r'| < |S|/\sqrt{2}$ and $|S|^3 > |N|$, then $|x|,|y| \leq 5|S|$ and  $|xy| < 9 |S|$.
		\end{Lemma}
		\begin{proof}
			We start with the equation
			\[
			Sx+r = \frac{N}{Sy+r'}
			\]
			so that 
			\[
			x = \frac{N}{S(Sy+r')} -\frac{r}{S}.
			\]
			The triangle inequalities then give 
			\begin{align*}
			|x| \leq \frac{|N|}{|S| (|Sy|-|r'|)} + \frac{|r|}{|S|} &\leq \frac{|N|}{|S| (|S|-|S|/\sqrt{2})} + \frac{|r|}{|S|} \\
			&\leq \frac{1}{1-1/\sqrt{2}}\frac{|N|}{|S^2|}+\frac{1}{\sqrt{2}}\\
			&\leq \left(\frac{\sqrt{2}}{\sqrt{2}-1}+\frac{1}{\sqrt{2}}\right)|S|\\
			&< 5|S|.
			\end{align*}
			The bound for $|y|$ is analogous.
			Next, since
			\[
			N = (Sx+r)(Sy+r') = S^2xy + S(xr'+yr)+rr',
			\]
			the triangle inequality gives
			\[
			|xy| = \left| \frac{N-S(xr'+yr)-rr'}{S^2}\right| \leq \left|\frac{N}{S^2}\right| + \left| \frac{xr'+yr}{S}\right| + \left|\frac{rr'}{S^2}\right|.
			\]
			Using $|r|,|r'| < |S|/\sqrt{2}$ gives that 
			\[
			|xy| \leq |S| + \frac{1}{\sqrt{2}}|x+y| + \frac{1}{2}.
			\]
			Finally, since each $|x|,|y| < 5|S|$, we have that 
			\[
			|xy| < |S| + \frac{10}{\sqrt{2}}|S| +\frac{1}{2}|S| < 9|S|. \qedhere
			\]
		\end{proof}
		\noindent Combining the above lemmas allows us to prove the following.
		\begin{Lemma}\label{SmallTerm}
			Under the hypotheses of Lemma $\ref{XYLemma}$, there exists $k$ with $$|a_k x +b_ky| < 12|S|.$$
		\end{Lemma}
		\begin{proof}
			Since the sequence $a_i$  begins at $a_0 = S$ and terminates at $a_t = 0$, and is strictly decreasing in absolute value, by Lemma \ref{XYLemma}, there exists $k\geq 1$ so that $5|a_{k}|\leq |y| < 5|a_{k-1}|$. Then 
			\[
			|a_{k}x+b_{k}y| \leq |a_kx| + |b_ky| \leq \frac{1}{5}|xy| + 5|b_ka_{k-1}| \leq  \frac{9}{5}|S|+10|S| < 12|S|. \qedhere
			\]
		\end{proof}
		We now prove correctness of Algorithm \ref{ZiAlg}. If $x,y$ are solutions to the equation $(Sx+r)(Sy+r') = N$, we must show that the algorithm discovers it. We first note that for all $k$ that $a_kx + b_ky \equiv c_k \mod S$. This is trivially true for $k = 0$, and for $k = 1$ we have from equation $\left(\ref{ProductForm}\right)$ that 
		\[
		S(xr'+yr) \equiv N-rr' \mod S^2
		\]
		so that 
		\[
		xr'r^{-1} + y \equiv \frac{N-rr'}{S}r^{-1} \mod S
		\]
		and thus 
		\[
		a_1x+b_1 y \equiv c_1 \mod S.
		\]
		Now for $k\geq 1$ we have that 
		\begin{align*}
		a_{k+1}x+b_{k+1} &= (a_{k-1}-q_ka_k)x + (b_{k-1}-q_kb_k)y\\
		&= a_{k-1}x + b_{k-1}y -q_k(a_kx+b_ky)\\
		&\equiv c_{k-1}-q_kc_{k} \equiv c_{k+1} \mod S.
		\end{align*}
		By Lemma $\left(\ref{SmallTerm}\right)$ one of the terms $a_kx+b_ky$ is smaller in absolute value than $12|S|$. Thus for some $k$, $a_kx+b_ky$ is one of the terms $\gamma_*$ discovered in line $7$. This means when $\gamma_j = \gamma_*$ in step $9$, both $x$ and $y$ are discovered. 
		
		We now consider the run time. Both steps $1$ and $2$ are trivial. Each iteration of the loop entered in step $3$ requires one (Gaussian) integer division, and one integer addition. Since $|a_{k+1}| \leq |a_k|/\sqrt{2}$, this loop is entered at most $O(\log|S|)$, and similarly $t = O(\log(|S|))$.
		
		The loop entered in line $6$ requires at most $O(mt)$ additions, multiplications and square root extractions of Gaussian integers whose real and imaginary parts are bounded by $O(|S|)$. Since $t = O(\log(|S|))$, to prove the theorem we must show that $m = O(1)$. Partitioning the complex plane into complete residue systems mod $S$ consisting of squares of area $|S|^2$, gives at most a finite number (independent of $|S|$) of squares that can fit inside a circle of radius $12|S|$. A crude upper bound would be for example $24^2$, the area of the square whose side length is the diameter of the circle. This completes the proof of Proposition $\ref{ZiAlgTh}$.
	\end{proof}
	We implemented Algorithm \ref{ZiAlg} in both \verb!Python! and \verb!C++!. For the \verb!C++! implementation we used the GNU Multiple Precision Arithmetic Library (GMP). GMP is a free library for operations on signed integers of arbitrary size. For each integer $10\leq k \leq 50$ we generated $2$ random signed integers $u_i$ satisfying $10^k\leq |u_i| < 10^{k+1}$ and used these values of $u_i$ to generate the real and imaginary parts of a Gaussian integer $N$. In a similar manor we picked a Gaussian integer $S$ whose (unsigned) real and imaginary parts belong to the interval $[10^{k/3},10^{(k+1)/3})$ and a Gaussian integer $r$ satisfying $|r| \leq |S|/\sqrt{2}$. If $\gcd(N,S) \neq 1$ or $\gcd(S,r) \neq 1$ or if $|S|^3 < |N|$ we reject this sample and try again using this same process. This was done until $100$ samples were selected for each $k$. The algorithm was then executed and timed on all $100$ inputs. The average of those run times is plotted for each $k$ in Figure \ref{ZiAlgTimes}, and highlight the polynomial runtime of Algorithm $\ref{ZiAlg}$.
	\begin{figure}[ht]
		\centering
		\includegraphics[scale=.6]{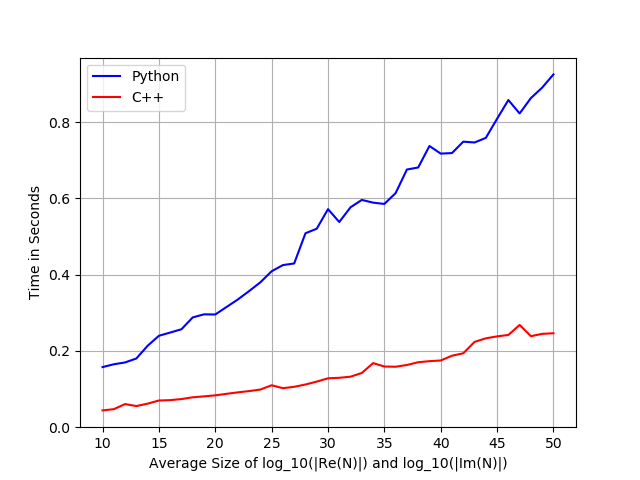}
		\caption{Timings of Algorithm \ref{ZiAlg} with large real and imaginary parts}
		\label{ZiAlgTimes}
	\end{figure}
	\subsection{Other imaginary Quadratic Euclidean Domains}
	In this section $\mathcal{O}_K$ will refer to the ring of integers for $K:= \qq(\sqrt{d})$ for $d=-1,-2, -3,-7,-11$. We will be able to form a version of Algorithm \ref{ZiAlg} if we can prove the appropriate versions of Lemmas \ref{ABLemma}, \ref{XYLemma}, and \ref{SmallTerm} which we do now.
	\begin{Lemma}
		Given $a,b \in \mathcal{O}_K$ with $b \neq 0$, there exists $q,r \in \mathcal{O}_K$ such that $a = bq + r$ with $|r| \leq \frac{\sqrt{15}}{4}|b|$.
	\end{Lemma}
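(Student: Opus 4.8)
The plan is to prove this by the standard ``round to the nearest lattice point'' argument, using that $\mathcal{O}_K$ is a rank-$2$ lattice in $\cc$ and that $r = b(a/b - q)$, so $|r| = |b|\cdot|a/b - q|$. It therefore suffices to show that every point of $K \subseteq \cc$ lies within distance $\frac{\sqrt{15}}{4}$, measured in the complex absolute value, of some element of $\mathcal{O}_K$; equivalently, that the covering radius of the lattice $\mathcal{O}_K$ is at most $\frac{\sqrt{15}}{4}$. I would split into the two shapes of $\mathcal{O}_K$ according to $d \bmod 4$, since the integral basis differs.

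First, for $d = -1, -2$ we have $\mathcal{O}_K = \zz[\sqrt{d}]$. Writing $a/b = s + t\sqrt{d}$ with $s,t \in \qq$, I choose integers $m, n$ nearest to $s, t$, so that $|s - m|, |t - n| \leq \tfrac{1}{2}$, and set $q = m + n\sqrt{d} \in \mathcal{O}_K$. Since $|\sqrt{d}|^2 = |d|$, this gives $|a/b - q|^2 = (s-m)^2 + |d|(t-n)^2 \leq \frac{1 + |d|}{4}$, which is $\tfrac{1}{2}$ for $d=-1$ and $\tfrac{3}{4}$ for $d=-2$; both are at most $\frac{15}{16} = \left(\frac{\sqrt{15}}{4}\right)^2$.

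The binding case is $d \equiv 1 \pmod 4$, i.e. $d = -3, -7, -11$, where $\mathcal{O}_K$ consists of the elements $\frac{m + n\sqrt{d}}{2}$ with $m \equiv n \pmod 2$. Writing $a/b = s + t\sqrt{d}$ again, the idea is to round the imaginary coordinate tightly and then round the real coordinate subject to the parity constraint. Concretely I would take $n$ to be the nearest integer to $2t$, so $|t - \tfrac{n}{2}| \leq \tfrac{1}{4}$, and then take $m$ to be the nearest integer of the same parity as $n$ to $2s$; since integers of a fixed parity are spaced $2$ apart, $|s - \tfrac{m}{2}| \leq \tfrac{1}{2}$. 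Setting $q = \frac{m + n\sqrt{d}}{2} \in \mathcal{O}_K$ yields $|a/b - q|^2 = (s - \tfrac{m}{2})^2 + |d|(t - \tfrac{n}{2})^2 \leq \frac{1}{4} + \frac{|d|}{16} = \frac{4 + |d|}{16}$, which equals $\frac{15}{16}$ precisely when $d = -11$ and is smaller for $d = -3, -7$. Taking square roots gives $|a/b - q| \leq \frac{\sqrt{15}}{4}$ in every case, and hence $|r| = |b|\,|a/b - q| \leq \frac{\sqrt{15}}{4}|b|$.

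The one real subtlety, and the step I would be most careful about, is the parity-constrained rounding in the $d \equiv 1 \pmod 4$ case: one must check that restricting $m$ to match the parity of $n$ still keeps the real-coordinate error at most $\tfrac{1}{2}$, rather than the $\tfrac14$ available under free rounding, and that it is the imaginary coordinate, weighted by the large factor $|d|$, that should be rounded tightly. Allocating the errors the other way would give the much worse bound $\frac{1 + 4|d|}{16}$. Everything else is routine triangle-inequality arithmetic, and the constant $\frac{\sqrt{15}}{4}$ is exactly the covering radius realized by the worst of the five lattices, namely $d = -11$.
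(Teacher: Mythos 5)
Your proof is correct and follows essentially the same route as the paper: split according to whether $\mathcal{O}_K$ is $\zz[\sqrt{d}]$ or has the half-integer form, round the $\sqrt{d}$-coordinate to the nearest half-integer (error $\leq \tfrac14$) and the real coordinate to the nearest half-integer of matching parity (error $\leq \tfrac12$), giving $\tfrac14 + \tfrac{|d|}{16} \leq \tfrac{15}{16}$ in the worst case $d=-11$. The paper's proof is the same computation with different variable names, and your explicit remark about why the error budget must be allocated with the tight rounding on the $\sqrt{d}$-coordinate is a point the paper leaves implicit.
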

	\begin{proof}
		Write $\frac{a}{b} = c_0+c_1\sqrt{d} \in \qq[\sqrt{d}]$. If $d = -1,-2$, pick $s,t$ to be the closest integers to $c_0$ and, $c_1$ respectively, so that $|c_0-s|\leq \frac{1}{2}$ and $|c_1-t|\leq \frac{1}{2}$. Then 
		\[
		\left|\frac{a}{b} - (s+t\sqrt{d}) \right|^2= \left|(c_0-s) + (c_1-t)\sqrt{d}\right|^2 \leq \frac{1}{4} + 2\frac{1}{4} = \frac{3}{4}.
		\]
		Thus 
		\[
		|a-(s+t\sqrt{d})b| \leq \frac{\sqrt{3}}{2}|b| \leq \frac{\sqrt{15}}{4}|b|.
		\]
		
		If $d = -3,-7,-11$, pick integers $s,t$ so that $\frac{t}{2}$ is as close to $c_1$ as possible, and $s$ has the same parity as $t$, and such that $\frac{s}{2}$ is as close to $c_0$ as possible. This gives that 
		\[
		\left| \frac{a}{b} - \frac{s+t\sqrt{d}}{2}\right|^2 = \left|c_0 - \frac{s}{2} + \left(c_1-\frac{t}{2}\right)\sqrt{d}\right|^2 \leq \frac{1}{4}+11\frac{1}{16} = \frac{15}{16}
		\] 
		so that 
		\[
		\left| a - b\frac{s+t\sqrt{d}}{2}\right| \leq \frac{\sqrt{15}}{4}|b|. \qedhere
		\]
	\end{proof}
	\begin{Cor}
		For $a_0,a_1\in \mathcal{O}_K$ if $|a_0|,|a_1| = O(|S|)$, then the sequence of successive remainders $a_{i+1} = a_{i-1}-q_ia_i$ terminates at $a_t = 0$ for some $t = O(\log|S|)$.
	\end{Cor}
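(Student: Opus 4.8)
The plan is to exploit the strict contraction factor $\lambda := \frac{\sqrt{15}}{4} < 1$ furnished by the preceding lemma, together with the fact that nonzero elements of $\mathcal{O}_K$ cannot have arbitrarily small absolute value. First I would observe that the defining recursion $a_{i+1} = a_{i-1} - q_i a_i$ is precisely the remainder $r_i$ in the Euclidean division of $a_{i-1}$ by $a_i$, so the bound $|r| \leq \frac{\sqrt{15}}{4}|b|$ of the lemma applies at every step and gives $|a_{i+1}| \leq \lambda |a_i|$. Iterating this inequality from the initial data yields $|a_i| \leq \lambda^{\,i-1}|a_1| = O\!\left(\lambda^{\,i}|S|\right)$, so the sequence of absolute values decays geometrically rather than merely decreasing.

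The termination then follows from a uniform lower bound on nonzero lattice points. For any nonzero $\alpha \in \mathcal{O}_K$, the field norm $N(\alpha) = \alpha\bar\alpha = |\alpha|^2$ is a positive rational integer, hence $|\alpha| \geq 1$. Consequently, as soon as the geometric estimate forces $|a_i| < 1$ we must have $a_i = 0$, and by construction the recursion stops. Solving $C\lambda^{\,i}|S| < 1$ for the implied constant $C$ gives $i > \frac{\log(C|S|)}{\log(1/\lambda)}$, so the terminating index satisfies $t = O(\log|S|)$, as claimed.

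I do not expect any serious obstacle here; the two points requiring care are that the contraction constant $\frac{\sqrt{15}}{4}$ is \emph{strictly} below $1$, so the decay is genuinely geometric, and that the lower bound $|\alpha| \geq 1$ holds uniformly across all five rings $\mathcal{O}_K$, which it does because in each case $|\alpha|^2$ equals the integer-valued norm $N(\alpha)$. The base of the logarithm depends on $\lambda$ and therefore on $d$, but since $d$ ranges over a fixed finite set this affects only the implied constant in the $O(\log|S|)$ estimate and not the stated order of growth.
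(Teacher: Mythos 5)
Your proof is correct and follows exactly the argument the paper intends: the paper states this Corollary without an explicit proof, treating it as an immediate consequence of the preceding division lemma, and your write-up supplies precisely that reasoning (geometric decay with ratio $\frac{\sqrt{15}}{4} < 1$, plus the observation that a nonzero $\alpha \in \mathcal{O}_K$ has $|\alpha|^2 = N(\alpha) \in \zz_{>0}$ and hence $|\alpha| \geq 1$). In fact, making the discreteness step explicit is a useful addition, since without it the contraction alone would not force termination at a zero term.
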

	By the above Lemma, there exists algorithmic Euclidean division in $\mathcal{O}_K$ so that for any given $N,S,r \in \mathcal{O}_K$ we can define the sequences $a_k,b_k,c_k$ in exactly the same way as lines $1$-$5$ of Algorithm $\ref{ZiAlg}$ (with the bound $|r_t| \leq |a_t/\sqrt{2}|$ replaced with $|r_t|\leq \frac{\sqrt{15}}{4}|a_t|$.)
	\begin{Lemma}\label{ABLemmaR}
		For $k\geq 0$, $a_kb_{k+1}-a_{k+1}b_k = (-1)^kS$ and $|a_kb_{k+1}| \leq 16\left(1-\left(\frac{15}{16}\right)^{k} \right)|S|$.
	\end{Lemma}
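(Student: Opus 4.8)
The plan is to follow the proof of Lemma \ref{ABLemma} essentially verbatim, since the only thing that changes in passing from $\zz[i]$ to a general $\mathcal{O}_K$ is the Euclidean contraction constant; the recurrences $a_{k+1}=a_{k-1}-q_k a_k$ and $b_{k+1}=b_{k-1}-q_k b_k$ have the same shape, with only the quotients $q_k$ differing. For the first identity I would induct on $k$. The base case is $a_0b_1-a_1b_0 = S\cdot 1 - a_1\cdot 0 = S$, and for the inductive step I would expand
\[
a_{k+1}b_{k+2}-a_{k+2}b_{k+1}=a_{k+1}(b_k-q_{k+1}b_{k+1})-(a_k-q_{k+1}a_{k+1})b_{k+1}=a_{k+1}b_k-a_kb_{k+1},
\]
which is $-(a_kb_{k+1}-a_{k+1}b_k)=-(-1)^kS=(-1)^{k+1}S$ by the inductive hypothesis. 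This half is purely formal and independent of the field.

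For the norm bound I would set $\lambda=\tfrac{\sqrt{15}}{4}$, so that the Euclidean division established above gives $|a_{k+1}|\le\lambda|a_k|$ and, because $a_k$ is itself a remainder, the two-step contraction $|a_{k+1}|\le\lambda^2|a_{k-1}|=\tfrac{15}{16}|a_{k-1}|$. Combining the determinant identity just proved with the triangle inequality, exactly as in Lemma \ref{ABLemma}, yields
\[
|S|=|a_kb_{k+1}-a_{k+1}b_k|\ge|a_kb_{k+1}|-|a_{k+1}b_k|\ge|a_kb_{k+1}|-\tfrac{15}{16}|a_{k-1}b_k|,
\]
so that $|a_kb_{k+1}|\le|S|+\tfrac{15}{16}|a_{k-1}b_k|$. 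Writing $f(k)=|a_kb_{k+1}|/|S|$, the inductive hypothesis at index $k-1$ turns this into the scalar recurrence $f(k)\le 1+\tfrac{15}{16}f(k-1)$, with base value $f(0)=1$ coming from $|a_0b_1|=|S|$.

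Finally I would unfold the recurrence as a geometric series,
\[
f(k)\le\sum_{j=0}^{k}\Big(\tfrac{15}{16}\Big)^{j}=16\Big(1-\big(\tfrac{15}{16}\big)^{k+1}\Big),
\]
giving $|a_kb_{k+1}|\le 16\big(1-(15/16)^{k+1}\big)|S|<16|S|$; the constant $16=\tfrac{1}{1-15/16}$ is precisely what makes the induction close, in direct analogy with the constant $2=\tfrac{1}{1-1/2}$ appearing in Lemma \ref{ABLemma}, and the uniform estimate $|a_kb_{k+1}|<16|S|$ is all that the later analogue of Lemma \ref{SmallTerm} will require. I do not expect a deep obstacle here; the one point needing care is the same bookkeeping handled separately for $k=1$ in Lemma \ref{ABLemma}, namely checking the low-index terms by hand so that the two-step contraction $|a_{k+1}|\le\tfrac{15}{16}|a_{k-1}|$ is genuinely available before the clean inductive step takes over.
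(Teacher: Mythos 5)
Your proposal matches the paper's proof essentially verbatim: the same induction for the determinant identity, and the same triangle-inequality estimate using the two-step contraction $|a_{k+1}b_k|\le \tfrac{15}{16}|a_{k-1}b_k|$, closed by the geometric-series constant $16=\tfrac{1}{1-15/16}$. Note that both your argument and the paper's actually establish the bound $16\bigl(1-(15/16)^{k+1}\bigr)|S|$; the exponent $k$ in the lemma's statement is an off-by-one slip (it would make the right-hand side vanish at $k=0$), so the $k+1$ version you derive is the one that is both true and used later.
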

	\begin{proof}
		Just as in the proof of Lemma \ref{ABLemma}, we have that $a_kb_{k+1}-a_{k+1}b_k = (-1)^kS$. The bound of $16\left(1-(15/16)^{k+1} \right)$ is also proved similarly. Supposing that $|a_{k-1}b_k| \leq 16\left(1-(15/16)^{k} \right)|S|$ then 
		\[
		|S| = |a_{k}b_{k+1}-a_{k+1}b_k| \geq |a_kb_{k+1}| - \frac{15}{16}|a_{k-1}b_{k}| 
		\]
		so that 
		\[
		|a_kb_{k+1}| \leq \left(1 + \frac{15}{16}\cdot16\left(1-(15/16)^{k}\right)\right)|S| = 16\left(1-(15/16)^{k+1} \right)|S|
		\]
		follows from induction. The base case $|a_0b_1| = |S| = 16(1-15/16)|S|$ holds by construction.
	\end{proof}
	\begin{Lemma}\label{XYTermGen}
		For nonzero $x,y,r,r',N \in \mathcal{O}_K$, if $(Sx+r)(Sy+r') = N$ with $|r|,|r'| < |S|\frac{\sqrt{15}}{4}$ and $|S|^3 > |N|$, then $|x|,|y| < 33|S|$ and $|xy| < 66|S|$.
	\end{Lemma}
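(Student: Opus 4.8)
The plan is to mirror the proof of Lemma \ref{XYLemma} line by line, replacing the Euclidean constant $1/\sqrt{2}$ with the constant $\frac{\sqrt{15}}{4}$ coming from the division lemma for $\mathcal{O}_K$. The only structural fact I need beyond the stated hypotheses is that every nonzero element of $\mathcal{O}_K$ has absolute value at least $1$: this is immediate from the fact that the minimal nonzero norm of each of the five rings is $1$ (a quick check of the relevant quadratic form for each $d$). Consequently $|y| \geq 1$ whenever $y \neq 0$, so $|Sy| \geq |S|$, and $Sy + r' \neq 0$ since $N \neq 0$, which keeps the division below valid.

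First I would bound $|x|$ (the bound on $|y|$ being symmetric). Starting from $Sx + r = N/(Sy + r')$ I solve
\[
x = \frac{N}{S(Sy + r')} - \frac{r}{S},
\]
and apply the triangle inequality using $|Sy + r'| \geq |S| - |r'|$ together with $|r'| < \frac{\sqrt{15}}{4}|S|$ and $|N| < |S|^3$. This yields
\[
|x| < \frac{|S|}{1 - \frac{\sqrt{15}}{4}} + \frac{\sqrt{15}}{4} = (16 + 4\sqrt{15})|S| + \frac{\sqrt{15}}{4},
\]
where $\frac{1}{1 - \sqrt{15}/4} = \frac{4}{4 - \sqrt{15}} = 16 + 4\sqrt{15}$ after rationalizing. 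Bounding the constant term by $\frac{\sqrt{15}}{4}|S|$ (legitimate since $|S| \geq 1$) reduces the claim $|x| < 33|S|$ to the clean inequality $16 + \frac{17\sqrt{15}}{4} < 33$, i.e.\ $\sqrt{15} < 4$.

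Next I would expand $N = S^2 xy + S(xr' + yr) + rr'$, solve for $xy$, and bound
\[
|xy| \leq \frac{|N|}{|S|^2} + \frac{|xr' + yr|}{|S|} + \frac{|rr'|}{|S|^2} < |S| + \frac{\sqrt{15}}{4}(|x| + |y|) + \frac{15}{16}.
\]
Substituting $|x|, |y| < 33|S|$ and $\frac{15}{16} < |S|$ gives $|xy| < \left(2 + \frac{33\sqrt{15}}{2}\right)|S|$, so that $|xy| < 66|S|$ amounts to the inequality $33\sqrt{15} < 128$.

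I do not expect a genuine obstacle, since the argument is identical in form to Lemma \ref{XYLemma}. The one thing demanding care is that the constants are now much tighter: because $\frac{\sqrt{15}}{4} \approx 0.968$ is so close to $1$, the geometric factor $16 + 4\sqrt{15} \approx 31.5$ is large, and the final bound $33\sqrt{15} < 128$ (that is, $\sqrt{15} \approx 3.873$ against $128/33 \approx 3.879$) holds only with a thin margin. I would therefore keep every inequality strict throughout, verify each numerical step exactly rather than rounding, and explicitly confirm $Sy + r' \neq 0$ before dividing.
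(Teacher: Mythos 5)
Your proposal is correct and follows exactly the route the paper intends: the paper's own proof is literally the one-line remark that Lemma \ref{XYLemma} goes through with $\tfrac{1}{\sqrt{2}}$ replaced by $\tfrac{\sqrt{15}}{4}$, which is precisely what you carry out. Your verification of the tight numerics ($16+\tfrac{17\sqrt{15}}{4}<33$ and $33\sqrt{15}<128$, i.e.\ $16335<16384$) and your explicit use of $|\alpha|\geq 1$ for nonzero $\alpha\in\mathcal{O}_K$ (needed both for $|Sy|\geq|S|$ and for absorbing the constant terms) supply details the paper leaves implicit, and both check out.
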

	\begin{proof}
		This is proven exactly the same way as Lemma \ref{XYLemma} but with $\frac{1}{\sqrt{2}}$ replaced with $\sqrt{15}/4$. 
	\end{proof}
	Combining the above Lemmas gives 
	\begin{Lemma}
		Under the hypotheses of Lemma \ref{XYTermGen}, there exists $k$ with $$|a_kx+b_ky| < 530 |S|.$$
	\end{Lemma}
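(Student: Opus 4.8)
The plan is to mirror the proof of Lemma \ref{SmallTerm} line for line, replacing the $\zz[i]$ constants by the ones supplied by Lemmas \ref{XYTermGen} and \ref{ABLemmaR}. The whole argument rests on locating an index $k$ at which $|a_k x|$ and $|b_k y|$ are simultaneously small, which is possible because the magnitudes $|a_i|$ sweep monotonically from $|a_0| = |S|$ down to $|a_t| = 0$.

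First I would record that $|a_i|$ is strictly decreasing: the Euclidean division underlying the recursion gives $|a_{i+1}| \leq \frac{\sqrt{15}}{4}|a_i|$ with $\frac{\sqrt{15}}{4} < 1$, so starting from $|a_0| = |S|$ the values descend to $|a_t| = 0$. By Lemma \ref{XYTermGen} we have $|y| < 33|S|$, whence $\frac{|y|}{33} < |a_0|$. Let $k$ be the smallest index with $|a_k| \leq \frac{|y|}{33}$; such $k$ exists since $|a_t| = 0$, and $k \geq 1$ since $|a_0| = |S| > \frac{|y|}{33}$, so $a_{k-1}$ is defined. By minimality this yields $33|a_k| \leq |y| < 33|a_{k-1}|$.

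Next I would split via the triangle inequality $|a_k x + b_k y| \leq |a_k x| + |b_k y|$ and estimate each piece. From $|a_k| \leq \frac{|y|}{33}$ we get $|a_k x| \leq \frac{1}{33}|xy|$, which the bound $|xy| < 66|S|$ of Lemma \ref{XYTermGen} drives below $2|S|$. From $|y| < 33|a_{k-1}|$ we get $|b_k y| < 33|a_{k-1} b_k|$, and Lemma \ref{ABLemmaR} (at index $k-1$) gives $|a_{k-1} b_k| < 16|S|$, so this piece is below $528|S|$. Adding, $|a_k x + b_k y| < 2|S| + 528|S| = 530|S|$, as claimed.

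There is no genuine obstacle beyond bookkeeping: the argument is structurally identical to Lemma \ref{SmallTerm}, and the only point to watch is that the looser Euclidean constant $\frac{\sqrt{15}}{4}$, in place of $\frac{1}{\sqrt{2}}$, is what inflates the inputs $|x|,|y| < 33|S|$, $|xy| < 66|S|$, and $|a_{k-1}b_k| < 16|S|$, thereby producing the larger final bound $530$ rather than $12$. I would verify the constant arithmetic $\frac{66}{33} + 33\cdot 16 = 530$ explicitly, since that product of the two bookkeeping bounds is the only place an error could creep in.
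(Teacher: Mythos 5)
Your proposal is correct and follows essentially the same route as the paper: pick the crossing index $k$ with $33|a_k| \leq |y| < 33|a_{k-1}|$, split by the triangle inequality, and feed in $|xy| < 66|S|$ from Lemma \ref{XYTermGen} and $|a_{k-1}b_k| < 16|S|$ from Lemma \ref{ABLemmaR} to get $2|S| + 528|S| = 530|S|$. If anything, your write-up is slightly more careful than the paper's, since you justify explicitly that the minimal such $k$ exists and satisfies $k \geq 1$, so that $a_{k-1}$ is defined.
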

	\begin{proof}
		Since $|a_0| = |S|$, $|a_t| = 0$ and $|y| \leq 33|S|$ there exists a $k$ with 
		\[
		33|a_k| \leq |y| \leq 33|S|.
		\]
		This means that 
		\[
		|a_kx+b_ky| \leq |a_kx|+|b_ky| \leq \frac{1}{33}|xy| + 33|a_{k-1}b_k| < 530 |S|
		\]
		where the final inequality follows from the above two Lemmas.
	\end{proof}
	Thus an algorithm very similar to Algorithm $\ref{ZiAlg}$ can be used to find divisors in residue classes of $\mathcal{O}_K$ in polynomial time whenever $|S|^3 \geq |N|$.
	\subsection{Other Euclidean Number Rings}
	Can this new technique be extended to other Euclidean number rings besides the imaginary quadratics? The main idea behind both Lenstra's algorithm and our adaptation is the ability to restrict a congruence equation to a small number of explicit residue class. This has the effect of changing the congruence information into a small list of equalities that can be checked quickly. 
	
	However, if $K$ is not one of the fields $\qq(\sqrt{d})$ for $d = -1,-2,-3,-7,-11$, then either $\mathcal{O}_K$ is not Euclidean (in which case we cannot hope to define the sequences $a_i,b_i,c_i$) or else by Dirichlet's unit theorem there exists a unit $u \in \mathcal{O}_K$ of infinite rank. In particular, this means we can pick $u$ so that $|u| < 1$ (since $u$ is not a root of unity). This means that even if we had knowledge of say $x \equiv c \mod S$ and $|x| \leq B$ for some bound $B > |c|$, then if there exists any $\gamma \in \mathcal{O}_K$ such that $|c+\gamma S| < B$, then $|c+u^m\gamma S| < B$ for infinitely many values of $m$. This means that the congruence condition does not limit our search for solutions to some finite list (let alone polynomial!). 
	
	However, the question of finding all solutions to $(Sx+r)(Sy+r') = N$ for $S,r,N \in \mathcal{O}_k$ is well defined whenever $\mathcal{O}_K$ is a unique factorization domain. Can a different technique find solutions $x,y$ in this case? The ring $\zz[\sqrt{2}]$ seems like a good place to start if such a question has a positive answer.
	\section{The Algorithm in $\zz[x]$}\label{PolySec}
	The degree function in $\zz[x]$ shares many of the important properties of $|\cdot|$ in $K$. Namely, $\zz[x]$ is a Euclidean domain whose Euclidean function satisfies the (ultra-metric) triangle inequality and there are no units of infinite order. This means that we will be able to successfully prove analogues of Lemmas \ref{ABLemma} through \ref{SmallTerm}, and then successfully translate the bounded congruence constraints into a small number of equations that can be checked individually. What follows in this section are the proofs of the modified Lemmas as well as a description of the algorithm finding all solutions $(f,g) \in \zz[x]^2$ to $(Sf+r)(Sg+r') = N$ for a given $S,r,N$ with $\gcd(S,N) = \gcd(S,r) = 1$ and $3\deg(S)\geq \deg(N)$. We prove the algorithm's correctness and runtime and hence prove Theorem \ref{PolyTh}.
	As before, set $a_0 = S$, $b_0 = 0$, $c_0 = 0$ and $a_1 \equiv r'r^{-1} \mod S$, $b_1 = 1$, $c_1 \equiv \frac{N-rr'}{S}r^{-1} \mod S$, and define $a_{k+1},b_{k+1},c_{k+1}$ via the usual recurrence.
	
	\begin{Lemma}\label{EuclidDivZx}(Euclidean Division in $\qq(x)$)
		Given polynomials $A,B \in \qq[x]$, there exists unique $q,r \in \qq[x]$ such that $A = qB+r$ with $\deg(r) < \deg(B)$. Further, if $A,B \in \zz[x]$ and the coefficient of the leading term of $B$ divides the content of $A$ (or the gcd of it's coefficients) then $q,r \in \zz[x]$. In particular, this is the case when $B$ is a monic polynomial.
	\end{Lemma}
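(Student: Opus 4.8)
The plan is to handle the two assertions separately: the field case over $\qq$ by the standard polynomial division algorithm, and the integrality refinement by tracking exactly where denominators can enter.

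For existence over $\qq[x]$ I would induct on $\deg(A)$. If $\deg(A) < \deg(B)$ then $q = 0$ and $r = A$ work. Otherwise write $n = \deg(A)$ and $m = \deg(B)$, let $a,b$ denote the leading coefficients of $A,B$, and form
\[
A' = A - \frac{a}{b}\,x^{n-m}B,
\]
whose degree is strictly less than $n$ since the leading terms cancel. Applying the inductive hypothesis to $A'$ gives $A' = q'B + r$ with $\deg(r) < m$, and then $q = (a/b)x^{n-m} + q'$ yields $A = qB + r$. For uniqueness I would suppose $q_1B + r_1 = q_2B + r_2$ with both remainders of degree $< m$; rearranging gives $(q_1 - q_2)B = r_2 - r_1$, and since $\qq[x]$ is an integral domain a nonzero $q_1 - q_2$ would force the left-hand side to have degree at least $m$ while the right-hand side has degree less than $m$, a contradiction. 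Hence $q_1 = q_2$ and $r_1 = r_2$.

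For the integrality claim the key observation is that the only division carried out in the above procedure is by $b$: at each reduction step one divides the current leading coefficient by $b$ to select the next quotient term. When $B$ is monic we have $b = 1$, so no genuine division ever occurs, every intermediate polynomial $A'$ remains in $\zz[x]$, and the resulting $q$ and $r$ lie in $\zz[x]$ automatically; this disposes of the ``in particular'' case, which is the only version actually invoked in Section \ref{PolySec}. For the more general divisibility hypothesis I would try to argue inductively that divisibility by $b$ is preserved from one reduction step to the next, so that each coefficient $a/b$ appearing in the quotient is an integer. I expect this propagation to be the \emph{main obstacle}: one must verify that after subtracting $(a/b)x^{n-m}B$ the relevant divisibility condition still holds for $A'$, so that the degree induction used for existence can be run entirely inside $\zz[x]$. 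The monic case sidesteps this difficulty entirely, which is precisely why it is singled out and is the form needed for the algorithm that follows.
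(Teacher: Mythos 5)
Your treatment of the first assertion (existence and uniqueness over $\qq[x]$) and of the monic case is correct and complete, and it is essentially the paper's argument: the paper's entire proof is the sentence ``this is simply long division of polynomials,'' so your write-up is considerably more careful than the original. You are also right that the monic case is the only one the paper actually uses --- in Algorithm \ref{PolyAlg} integrality is needed only for reductions modulo the monic polynomial $S$, while the other Euclidean steps are deliberately carried out in $\qq[x]$.

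The step you flagged as the main obstacle is a genuine gap, and it cannot be closed, because the intermediate claim of the lemma is false as stated. Take $A = 2x^2$ and $B = 2x+1$: the leading coefficient of $B$ is $2$ and it divides the content of $A$ (which is $2$), yet
\[
2x^2 = \left(x - \tfrac{1}{2}\right)(2x+1) + \tfrac{1}{2},
\]
and by the uniqueness you already proved this is the only admissible quotient--remainder pair, so $q,r \notin \zz[x]$. The failure is exactly the propagation problem you anticipated: after the first reduction step one has $A' = A - x\cdot B = -x$, whose content is $1$, no longer divisible by the leading coefficient $2$ of $B$; the hypothesis that $\ell(B)$ divides the content of $A$ is not inherited by $A'$, so the degree induction cannot be run inside $\zz[x]$. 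What your argument does prove is the case where the leading coefficient of $B$ is a unit (i.e.\ $B$ monic up to sign), since then no division of coefficients occurs at any step. The lemma should simply be weakened to that case; as you observed, this costs nothing in Section \ref{PolySec}, and Theorem \ref{PolyTh} is unaffected.
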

	\begin{proof}
		This is simply long division of polynomials.
	\end{proof}
	\begin{Lemma}
		For all $k$ we have $a_kb_{k+1}-a_{k+1}b_k = (-1)^k S$ and $\deg(a_kb_{k+1}) = \deg(S)$. 
	\end{Lemma}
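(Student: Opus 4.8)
The plan is to mirror the structure of the proof of Lemma \ref{ABLemma}, replacing the Archimedean estimates of the $\zz[i]$ setting by exact degree bookkeeping that the non-Archimedean (ultrametric) degree function on $\zz[x]$ makes available. The first identity, $a_kb_{k+1}-a_{k+1}b_k = (-1)^kS$, is purely formal: it uses only the common recurrences $a_{k+2}=a_k-q_{k+2}a_{k+1}$ and $b_{k+2}=b_k-q_{k+2}b_{k+1}$, so the computation in Lemma \ref{ABLemma} carries over verbatim in any commutative ring. Concretely, the base case is $a_0b_1-a_1b_0 = S\cdot 1 - a_1\cdot 0 = S$, and substituting the recurrences into $a_{k+1}b_{k+2}-a_{k+2}b_{k+1}$ collapses, after the $q_{k+2}$ terms cancel, to $-(a_kb_{k+1}-a_{k+1}b_k)$, yielding $(-1)^{k+1}S$ by induction.

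For the degree statement the key intermediate claim is that
\[
\deg b_{k+1} = \deg S - \deg a_k \qquad (a_k \neq 0),
\]
after which $\deg(a_kb_{k+1}) = \deg a_k + \deg b_{k+1} = \deg S$ follows immediately. I would prove the claim by induction on $k$. By Lemma \ref{EuclidDivZx} each $a_{k+1}$ is the remainder of $a_{k-1}$ divided by $a_k$, so the $a_k$ have strictly decreasing degree and the quotient satisfies $\deg q_{k+1} = \deg a_{k-1}-\deg a_k$. The base cases $b_1 = 1$ and $b_2 = -q_2$ give $\deg b_1 = 0 = \deg S - \deg a_0$ and $\deg b_2 = \deg q_2 = \deg S - \deg a_1$. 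For the inductive step I would apply the degree function to the recurrence $b_{k+1} = b_{k-1}-q_{k+1}b_k$: using the hypotheses $\deg b_k = \deg S - \deg a_{k-1}$ and $\deg b_{k-1} = \deg S - \deg a_{k-2}$, one computes $\deg(q_{k+1}b_k) = \deg S - \deg a_k$, which strictly exceeds $\deg b_{k-1} = \deg S - \deg a_{k-2}$ because $\deg a_k < \deg a_{k-2}$. The ultrametric property of $\deg$ then forces the leading terms not to cancel, so $\deg b_{k+1} = \deg(q_{k+1}b_k) = \deg S - \deg a_k$, completing the induction.

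The step that demands genuine care, and the one where the polynomial case diverges from the imaginary-quadratic case, is precisely this appeal to the ultrametric inequality. In $\zz[i]$ the triangle inequality only yields the one-sided bound $|a_kb_{k+1}| \le (2-2^{-k})|S|$, whereas in $\zz[x]$ the identity $\deg(u-v) = \max(\deg u,\deg v)$ whenever $\deg u \neq \deg v$ upgrades this to an exact equality; the real content of the argument is therefore to verify the strict degree separation $\deg(q_{k+1}b_k) > \deg b_{k-1}$ that rules out cancellation of leading coefficients. I would also flag the boundary convention: the degree identity is asserted for indices with $a_k \neq 0$, since at the terminal index $t$ where $a_t = 0$ the product $a_tb_{t+1}$ vanishes, whereas the determinant identity holds at every index for which both terms are defined.
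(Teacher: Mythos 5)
Your proof is correct, but for the degree claim it takes a genuinely different route than the paper. The paper never tracks the degrees of the $b_k$ individually: it runs the induction directly on the statement $\deg(a_kb_{k+1})=\deg(S)$, using the determinant identity from the first half of the lemma as the engine. Concretely, from $a_{k+1}b_{k+2}-a_{k+2}b_{k+1}=(-1)^{k+1}S$ and the inductive hypothesis $\deg(a_kb_{k+1})=\deg(S)$, the paper notes $\deg(a_{k+2}b_{k+1})=\deg(a_{k+2})+\deg(b_{k+1})<\deg(a_k)+\deg(b_{k+1})=\deg(S)$, and the ultrametric property then forces $\deg(a_{k+1}b_{k+2})=\deg(S)$; so in the paper the two halves of the lemma are coupled, with part one feeding part two. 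Your argument instead proves the finer, exact formula $\deg(b_{k+1})=\deg(S)-\deg(a_k)$ by inducting on the recurrence $b_{k+1}=b_{k-1}-q_{k+1}b_k$ together with the quotient-degree identity $\deg(q_{k+1})=\deg(a_{k-1})-\deg(a_k)$, and never invokes the determinant identity for this part. Both arguments rest on the same no-cancellation principle ($\deg(u-v)=\max(\deg u,\deg v)$ when $\deg u\neq\deg v$), and both are sound. What your route buys: strictly more information (the exact degree of every cofactor $b_k$, which could be reused elsewhere) and logical independence of the two claims; what the paper's route buys: brevity, since it recycles the identity it has just proved and requires no bookkeeping of quotient degrees. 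Your closing caveat about the terminal index (the degree claim only makes sense while $a_k\neq 0$, whereas the identity holds at every index where the terms are defined) is a fair sharpening of the paper's unqualified ``for all $k$,'' which implicitly carries the same convention.
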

	\begin{proof}
		The proof of the first fact is exactly the same as in Lemmas $\ref{ABLemma}$ and $\ref{ABLemmaR}$. 
		
		For $k = 0$ we have $\deg(a_0b_1) = \deg(S)$ by construction, and for $k = 1$
		\[
		\deg(S) = \deg(a_1b_2-a_2b_1) = \deg(a_1b_2 - a_2) = \deg(a_1b_2)
		\] where the final equality follows from the fact that $\deg(a_2) < \deg(a_1)\leq \deg(a_1b_2)$. Finally, for $k\geq 1$ we have that 
		\[
		\deg(S) = \deg(a_{k+1}b_{k+2}-a_{k+2}b_{k+1}) = \deg(a_{k+1}b_{k+2}).
		\]
		Again, the final equality holds since $\deg(a_{k+2}) < \deg(a_{k})$, so that the inequality $\deg(a_{k+2}b_{k+1}) < \deg(a_kb_{k+1}) = \deg(S)$ holds. \qedhere
	\end{proof}
	\begin{Lemma}\label{XYtermPoly}
		For $f,g,r,r',N \in \zz[x]$ nonzero, if $$(Sf+r)(Sg+r') = N$$ with $\deg(r),\deg(r') < \deg(S)$, and $3\deg(S) \geq \deg(N)$, then $\deg(f)+\deg(g)\leq \deg(S)$. In particular $\deg(f),\deg(g) \leq \deg(S)$.
	\end{Lemma}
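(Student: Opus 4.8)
The plan is to replace the multiplicativity of the complex absolute value used in Lemma \ref{XYLemma} by the \emph{additivity} of the degree function, which in the integral domain $\zz[x]$ is far more rigid: for nonzero polynomials one always has the \emph{exact} identity $\deg(PQ) = \deg(P) + \deg(Q)$, and for a sum the degree of the dominant summand survives whenever the two summands have different degrees. Because of this, the chain of triangle inequalities in Lemma \ref{XYLemma} collapses into a single equation, and the desired bound will come out without any of the slack inherent in the archimedean case.

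First I would pin down the degrees of the two factors. Since $f$ is nonzero it has $\deg(f) \geq 0$, so
\[
\deg(Sf) = \deg(S) + \deg(f) \geq \deg(S) > \deg(r).
\]
The strictness of the last inequality is exactly the point: it guarantees that the leading term of $Sf$ cannot be cancelled by $r$, whence $\deg(Sf + r) = \deg(S) + \deg(f)$. The identical argument, using $\deg(r') < \deg(S)$ and $\deg(g) \geq 0$, gives $\deg(Sg + r') = \deg(S) + \deg(g)$.

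Next I would take degrees on both sides of $(Sf+r)(Sg+r') = N$. Using additivity of degree over the product together with the two computations above,
\[
\deg(N) = \deg(Sf+r) + \deg(Sg+r') = 2\deg(S) + \deg(f) + \deg(g).
\]
Finally, substituting this into the hypothesis $3\deg(S) \geq \deg(N)$ yields $2\deg(S) + \deg(f) + \deg(g) \leq 3\deg(S)$, that is, $\deg(f) + \deg(g) \leq \deg(S)$, which is the main claim. The ``in particular'' assertion is then immediate, since $\deg(f), \deg(g) \geq 0$ forces each of $\deg(f)$ and $\deg(g)$ to be at most their sum, hence at most $\deg(S)$.

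There is no genuinely hard step here; this is precisely the place where the ultrametric nature of the degree makes the polynomial setting strictly easier than the number-ring setting of Lemma \ref{XYLemma}. The only point requiring a word of care is the non-cancellation of leading terms, i.e.\ the justification of $\deg(Sf+r) = \deg(Sf)$, and this is handled cleanly by the strict inequality $\deg(r) < \deg(S)$ built into the hypotheses.
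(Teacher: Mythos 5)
Your proof is correct and takes essentially the same approach as the paper: both arguments rest on additivity of the degree over products and on the hypothesis $\deg(r),\deg(r') < \deg(S)$ to prevent cancellation of the dominant term, followed by substitution into $3\deg(S) \geq \deg(N)$. The only cosmetic difference is that the paper expands $N = S^2fg + S(fr'+gr)+rr'$ and bounds $\deg(N) \geq \deg(S^2fg)$, whereas you take degrees of the two factors $Sf+r$ and $Sg+r'$ directly; your variant makes the non-cancellation step explicit and yields the equality $\deg(N) = 2\deg(S)+\deg(f)+\deg(g)$, but the content is identical.
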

	\begin{proof}
		Since $N = (Sf+r)(Sg+r') = S^2fg + S(fr'+gr) + rr'$ we have that 
		\[
		3\deg(S) \geq \deg(N) \geq \deg(S^2fg) = 2\deg(S) + \deg(f) + \deg(g). \qedhere
		\]
	\end{proof}
	\begin{Lemma}\label{SmallTermPoly}
		Under the hypotheses of Lemma \ref{XYtermPoly}, there exists $k$ with $$\deg(a_kf+b_kg) \leq \deg(S).$$
	\end{Lemma}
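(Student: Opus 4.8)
The plan is to follow the proof of Lemma~\ref{SmallTerm}, but to replace the absolute-value estimates by degree estimates. The point is that the ultrametric inequality $\deg(P+Q) \le \max\{\deg P, \deg Q\}$ makes this case noticeably cleaner than the Gaussian one: there are no constants to track, so the ``small term'' is produced exactly rather than up to a multiplicative factor.

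First I would record the complementary behaviour of the two sequences. From the identity $\deg(a_k b_{k+1}) = \deg(S)$ proved just above, one gets $\deg(b_k) = \deg(S) - \deg(a_{k-1})$ for every $k \ge 1$. Hence, as $\deg(a_k)$ strictly decreases from $\deg(a_0) = \deg(S)$ down to $-\infty$ (the value attained at the terminating index, where $a_t = 0$), the degrees $\deg(b_k)$ strictly increase. This is the polynomial analogue of the interplay between $|a_k|$ and $|b_k|$ used in Lemma~\ref{SmallTerm}.

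Next I would locate the critical index by sandwiching $\deg(g)$. Since $\deg(g) \le \deg(S)$ by Lemma~\ref{XYtermPoly} and the $\deg(a_k)$ strictly decrease to $-\infty$, there is a least index $k \ge 1$ with $\deg(a_k) \le \deg(g)$. Minimality (together with $\deg(a_0) = \deg(S) \ge \deg(g)$ in the case $k = 1$) forces $\deg(a_{k-1}) \ge \deg(g)$, so that $\deg(a_k) \le \deg(g) \le \deg(a_{k-1})$. This is the counterpart of choosing $k$ with $5|a_k| \le |y| < 5|a_{k-1}|$ in the Gaussian argument. Moreover $k \le t$, since $a_t = 0$ already satisfies the defining inequality; hence $a_{k-1} \ne 0$ and the degree formula for $b_k$ applies at the chosen index. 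With this $k$ in hand the estimate is immediate: using Lemma~\ref{XYtermPoly} again,
\[
\deg(a_k f) = \deg(a_k) + \deg(f) \le \deg(g) + \deg(f) \le \deg(S),
\]
while
\[
\deg(b_k g) = \deg(b_k) + \deg(g) = \bigl(\deg(S) - \deg(a_{k-1})\bigr) + \deg(g) \le \deg(S),
\]
since $\deg(g) \le \deg(a_{k-1})$. The ultrametric inequality then yields $\deg(a_k f + b_k g) \le \max\{\deg(a_k f), \deg(b_k g)\} \le \deg(S)$, as required.

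I do not expect a serious obstacle here: unlike the archimedean cases, no inequality-chasing with explicit constants is needed, and the multiplicativity of $\deg$ together with the relation $\deg(a_k) + \deg(b_{k+1}) = \deg(S)$ does all the work. The only points that warrant care are verifying that the sandwiching index $k$ genuinely exists (which uses $\deg(g) \le \deg(S)$ and that the $a$-degrees reach $-\infty$), handling the boundary case $k = 1$, and checking that $a_{k-1} \ne 0$ so that the relation $\deg(b_k) = \deg(S) - \deg(a_{k-1})$ is valid at the chosen index.
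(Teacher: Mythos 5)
Your proposal is correct and is essentially the paper's own argument: sandwich $\deg(g)$ between consecutive degrees $\deg(a_k) \leq \deg(g) \leq \deg(a_{k-1})$, bound $\deg(a_k f)$ via $\deg(f)+\deg(g) \leq \deg(S)$ and $\deg(b_k g)$ via $\deg(a_{k-1}b_k) = \deg(S)$, and finish with the ultrametric inequality. If anything, your version is slightly more careful than the paper's, which asserts $\deg(a_t) = 0$ for the terminating term $a_t = 0$ (where the degree is really $-\infty$) and does not comment on the nonvanishing of $a_{k-1}$; these are exactly the boundary points you flag.
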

	\begin{proof}
		Pick $k$ so that $\deg(a_{k+1}) \leq \deg(g) \leq \deg(a_k)$. This is possible since $\deg(a_0) = \deg(S) \geq \deg(g)$ and $\deg(a_t) = 0 \leq \deg(g)$. Then 
		\[
		\deg(a_{k+1}f+b_{k+1}g) \leq \max(\deg(fg),\deg(a_kb_{k+1})\leq \deg(S)\qedhere
		\]
	\end{proof}
	\begin{Alg}\label{PolyAlg}
		\mbox{}
		
		\noindent	\textit{Inputs:}
		\begin{itemize}
			\item[-] Polynomials $S,N,r$ with $\deg(S)^3 \geq \deg(N)$, and $\gcd(N,S) = \gcd(S,r) = 1$, a list $L$ of the divisors of $\ell(N)/\left(\ell(S)^2\right)$ in $\zz$, where $\ell(p)$ is the leading coefficient of $p \in \zz[x]$. Note that if $\left(\ell(S)^2\right) \nmid \ell(N)$ the list $L$ is empty.
		\end{itemize}
		\textit{Outputs:}
		\begin{itemize}
			\item[-] A complete list of polynomials $d \in \zz[x]$ such that $d$ divides $N$, and $d \equiv r \mod S$.
		\end{itemize}
		\begin{algorithmic}[1]
			\STATE Set $r = r \mod S$, and initialize $a_0 = S, a_1 = r'r^{-1} \mod S$, $b_0 = 0, b_1 = 1$, $c_0 = 0, c_1 = \frac{N-rr'}{S}r^{-1} \mod S$. Finally, set $r' = Nr^{-1} \mod S$ and set $t = 1$.
			\STATE If $r | N$, add $r$ to the list of outputs. If $r'|N$, add $N/r'$ to the list of outputs.
			\WHILE{$a_t \neq 0$}
			\STATE Calculate $q_t,r_t$ via Lemma \ref{EuclidDivZx} so that $a_{t-1} = a_{t}q_{t} + r_t$ and $\deg(r_t) < \deg(a_{t})$.
			\STATE Set $t = t+1$, and $a_t = r_{t-1}$. In addition, set $b_t = b_{t-2} - q_{t-1} b_{t-1}$ and set $c_t = c_{t-2}-q_{t-1}c_{t-1} \mod S$.
			\ENDWHILE 
			\FOR{$i=1,\dots, t$}
			\FOR{$d \in L$}
			\STATE Solve the system $a_if+b_ig = c_i$ and $(Sf+r)(Sg+r') = N$ for $f,g$. If $f,g$ are elements of $\zz[x]$ add $Sf+r$ to the list of outputs.
			\STATE Solve the system $a_if+b_ig = c_i+\left(\ell(a_i)d + \ell(b_i)\frac{\ell(N)}{\ell(S)^2d}\right)/\ell(S)S$ and $(Sf+r)(Sg+r') = N$ for $f,g$. If $f,g$ are elements of $\zz[x]$ add $Sf+r$ to the list of outputs.
			\ENDFOR
			\ENDFOR
			\STATE Return the list of outputs.
		\end{algorithmic}
	\end{Alg}
	\begin{Prop}\label{PolyAlgTh}
		Algorithm $\left(\ref{PolyAlg}\right)$ is correct, and runs in $O(|L|\deg(N))$ elementary operations on elements in $\zz[x]$. This proves Theorem $\ref{PolyTh}$
	\end{Prop}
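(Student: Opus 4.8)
The plan is to follow the proof of Proposition \ref{ZiAlgTh} almost verbatim, with the degree function playing the role of the complex absolute value and the three $\zz[x]$-lemmas just proved (the analogues of Lemmas \ref{ABLemma}, \ref{XYLemma}, \ref{SmallTerm}) supplying the needed bounds. First I would record that, exactly as in Proposition \ref{ZiAlgTh}, one has $a_k f + b_k g \equiv c_k \pmod S$ for every $k$: the cases $k=0,1$ are checked directly --- the case $k=1$ coming from reducing $N = (Sf+r)(Sg+r')$ modulo $S^2$ to obtain $(N-rr')/S \equiv fr'+gr \pmod S$ --- and the inductive step is immediate from the recurrences defining $a_k,b_k,c_k$. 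I would also note that lines 1--2 dispose of the degenerate solutions with $f=0$ or $g=0$ (which give the divisors $r$ and $N/r'$), so that throughout the main loop I may assume $f,g\neq 0$ and invoke Lemma \ref{XYtermPoly}.

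The heart of the argument is the passage from a congruence to an equality. By Lemma \ref{SmallTermPoly} there is an index $i$ with $\deg(a_i f + b_i g)\le \deg S$. Because $a_i f + b_i g \equiv c_i \pmod S$ and $c_i$ has been reduced modulo $S$ (so $\deg c_i < \deg S$), the polynomial $a_i f + b_i g - c_i$ is a multiple of $S$ of degree at most $\deg S$; dividing, it must equal $\lambda S$ for a \emph{constant} $\lambda\in\zz$. Thus $a_i f + b_i g = c_i + \lambda S$, and once $\lambda$ is known the right-hand side $\gamma := c_i + \lambda S$ is an explicit polynomial.

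To determine $\lambda$ I would compare coefficients of $x^{\deg S}$ on both sides: the right-hand side contributes $\lambda\,\ell(S)$, while the left-hand contribution comes only from whichever of the products $a_i f$, $b_i g$ attains degree $\deg S$, with leading coefficients $\ell(a_i)\ell(f)$ and $\ell(b_i)\ell(g)$ respectively. The key constraint is that, since the top term of $N = S^2fg + S(fr'+gr)+rr'$ comes from $S^2fg$, one has $\ell(N)=\ell(S)^2\,\ell(f)\,\ell(g)$; hence $\ell(f)$ is a divisor $d$ of $\ell(N)/\ell(S)^2$ (so $d\in L$) and $\ell(g)=\ell(N)/(\ell(S)^2 d)$. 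This is exactly why lines 8--9 test $\gamma=c_i$ (the case $\lambda=0$, where the top degree drops) and $\gamma = c_i + \bigl(\ell(a_i)d+\ell(b_i)\ell(N)/(\ell(S)^2 d)\bigr)S/\ell(S)$ as $d$ ranges over $L$. I expect the main obstacle to be precisely this leading-coefficient bookkeeping: one must track, for the index supplied by Lemma \ref{SmallTermPoly} (and, when $\deg f+\deg g=\deg S$, possibly for neighbouring indices), which summands reach degree $\deg S$, and verify that every value of $\lambda$ that can arise is produced by line 8 or by line 9 for some $d\in L$. Granting this, the pair $(f,g)$ is recovered once $\gamma$ is correct: substituting $u=Sf+r$, $v=Sg+r'$ turns $a_i f+b_i g=\gamma$ into $a_i u + b_i v = \gamma S + a_i r + b_i r'$, which together with $uv=N$ yields a single quadratic in $u$ over $\qq(x)$; solving it costs one polynomial square-root extraction, after which one checks whether $u,v$, and hence $f,g$, lie in $\zz[x]$.

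For the running time I would argue as in Proposition \ref{ZiAlgTh}. Since the degrees $\deg a_k$ strictly decrease from $\deg S$ to $0$, the while loop terminates after $t=O(\deg S)$ iterations, each costing $O(1)$ elementary operations in $\zz[x]$ (one division from Lemma \ref{EuclidDivZx} together with a bounded number of additions and multiplications). The double loop runs $O(t\,|L|)$ times and each pass solves one quadratic in $O(1)$ elementary operations, for a total of $O(|L|\,t)=O(|L|\deg N)$, where I use that any solution with $f\neq 0$ forces $\deg S\le \deg N$. Finally, to deduce Theorem \ref{PolyTh} I would specialise to $N$ (and $S$) monic: then $\ell(N)=\ell(S)=1$, so $\ell(N)/\ell(S)^2=1$ has only the divisors $\pm 1$, giving $|L|=O(1)$ and the claimed bound $O(\deg N)$.
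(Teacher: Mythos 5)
Your proposal follows the paper's own proof essentially step for step: dispose of $f=0$ or $g=0$ via line 2, establish $a_kf+b_kg\equiv c_k \pmod{S}$, invoke Lemma \ref{SmallTermPoly} to convert the congruence into an equality $a_if+b_ig=c_i+\lambda S$ with $\lambda$ a constant, determine $\lambda$ from $\ell(N)=\ell(S)^2\ell(f)\ell(g)$ with $\ell(f)=d\in L$, and count $O(|L|\deg N)$ operations. Two small remarks: the constant is a priori in $\qq$ rather than $\zz$ (the paper notes the $a_i,b_i$ may lie in $\qq[x]$, since the divisions need not stay integral), and the leading-coefficient bookkeeping you flag as the ``main obstacle'' is precisely the step the paper dispatches with the single phrase ``by comparing the leading terms,'' so your conditional treatment of it is no less careful than the paper's.
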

	\begin{proof}
		We first show correctness. To do this, we must show that if $(Sf+r)(Sg+r') = N$ is a solution that the algorithm recovers it. If either $f$ or $g$ is equal to $0$, then the correct divisor is discovered in line $2$. We now assume that neither $f$ nor $g$ is the zero polynomial.
		
		By Lemma \ref{SmallTermPoly}, there exists a value $0\leq *\leq t$ such that $\deg(a_*f+b_*g) \leq \deg(S)$ and $a_*f+b_*g \equiv c_* \mod S$. This means that $a_*f+b_*g = c_* + pS$ for some $p\in\qq[x]$ where either $p = 0$ or $\deg(p) = 0$ (recall that the terms $a_*,b_*$ may be in $\qq[x]$ and not $\zz[x]$). If $p = 0$, the algorithm will recover $f$ and $g$ in line $8$. 
		
		We now consider the case $p \neq 0$. Since $(Sf+r)(Sg+r') = N$, we have that $\ell(N) = \ell(S)^2\ell(f)\ell(g)$ meaning that $\ell(f) = d_*$ for some $d_* \in L$ and $\ell(g) = \frac{\ell(N)}{\ell(S)^2d_*}$. By comparing the leading terms of both sides of the equation $a_*f+b_*g = c_*+pS$ we obtain 
		\[
		p = \left(\ell(a_i)d + \ell(b_i)\frac{\ell(N)}{\ell(S)^2d}\right)/\ell(S)
		\]
		so that the algorithm will recover $f,g$ in line $9$. 
		
		We now examine the run time. Both steps $1$ and $2$ are trivial. Each iteration of the loop entered in step $3$ requires one polynomial long division, one integer division, and two polynomial multiplications and subtractions. Since $\deg(a_{k+1}) < \deg(a_k)$, this loop is entered at most $O(\deg(S))$ times. Similarly $t \leq \deg(S)$.
		
		The nested loop entered in lines $6$ and $7$ requires at most $O(|L|t)$ additions, multiplications and square root extractions of polynomials in $\zz[x]$. This completes the proof of Proposition $\ref{PolyAlgTh}$.
	\end{proof}
	\begin{Rem}
		It is perhaps slightly unfair to consider square root extraction of a polynomial as an elementary operation. If $p \in \zz[x]$ we describe briefly how it can be determined if $p$ is a square in $\zz[x]$, and if so, return the square-root in $O(\deg(p))$ elementary operations on integers (including $1$ integer square root extraction). We write $p = p_0+p_1x + \cdots +p_{m}x^{m}$. If $p_0$ is not a square in $\zz[x]$, or if $m$ is odd, then $p$ is not a square in $\zz[x]$. Otherwise, write $p_0 = a_0^2$ and $n = m/2$, and define the recurrence relation 
		\[
		a_{i+1} = \frac{p_{i+1}-\sum_{k=1}^{i}a_{k}a_{n+1-k}}{2a_0}.
		\]
		If $a_i \in \zz$ for $0\leq i \leq m$ and $a_i = 0$ for $n < i \leq m$, then set $A = a_0 + a_1x+\cdots + a_nx^n$. Then $A^2 = p$ if and only if $p$ is a square in $\zz[x]$. This works because the recurrence relation defined above gives the coefficients of the power-series expansion of $\sqrt{p}$ as an element of $\cc[[x]]$.
		
		We also remark that it is easy to adapt Algorithm $\ref{PolyAlg}$ to the case $\mathcal{O}_K[x]$ for any imaginary quadratic field $K = \qq(\sqrt{d})$ of class number $1$ (not just the Euclidean domains). In general for other number rings we will either not have unique factorization (i.e. the class number is larger than $1$) or the rank of the unit group will be at least $1$ (so that $L$ is not a finite list).
	\end{Rem}
	\section{A Note on Upper and Lower Bounds}\label{BoundsSec}
	In this section we discuss the known upper and lower bounds on the number of divisors an integer can have in a given residue class. 
	We do this in two subsections dedicated to upper and lower bounds respectively. 
	In the first section we summarize techniques due to Lenstra, and Coppersmith, Howgrave-Graham, and Nagaraj. We also briefly describe how their methods can be adapted to the number fields in Theorem \ref{ImagQuadTh}.
	In the second section we describe the current state of the lower bound problem and summarize work due to Henri Cohen and provide some previously unknown examples (and families of examples). 
	\subsection{Upper Bounds}
	In Lenstra's original paper \cite{DivsInRes}, he attempts to answer the problem of how many outputs his algorithm can produce, and proved that for any integers $N,S,r$ with $\gcd(N,S) = \gcd(S,r) = 1$ that the number of divisors of $N$ congruent to $r$ mod $S$ is less than a constant $c(\alpha)$ depending only on $\alpha$ whenever $S^{\alpha} > N$ and $\alpha > 1/4$.
	Lenstra did this by proving the following general fact about weight functions of finite sets.
	
	\begin{Th}[H.W. Lenstra]\label{LenstraCombo}
		Let $V$ be a finite set, $\omega$ a weight function on $V$ (a finite measure on $\mathcal{P}(V)$) and $\alpha \in \mathbb{R},\alpha > 1/4$. Further let $\mathcal{D}$ be a system of subsets of $V$ such that 
		\[
		\max\left\{\omega(D_1-D_2),\omega(D_2-D_1) \right\} \geq \alpha \omega(V)
		\]
		for all $D_1,D_2\in \mathcal{D}$. Then $|\mathcal{D}| \leq c(\alpha)$, where $c(\alpha)$ is a constant that depends only on $\alpha$. In addition, $c(1/3) = 12$.
	\end{Th}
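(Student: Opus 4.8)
The plan is to recast the statement as a second–moment (Gram matrix) inequality for the indicator functions of the sets in $\mathcal{D}$, and then to extract the explicit constant by optimizing that inequality. First I would normalize so that $\omega(V)=1$, making $\omega$ a probability measure in which each $D\in\mathcal{D}$ carries mass $m_D:=\omega(D)\in[0,1]$. Writing $\chi_D\in L^2(\omega)$ for the indicator of $D$, the hypothesis has a clean reformulation as a bound on overlaps: since $\max\{\omega(D_1\setminus D_2),\omega(D_2\setminus D_1)\}=\max\{m_{D_1},m_{D_2}\}-\omega(D_1\cap D_2)$, the assumption says exactly that
\[
\omega(D_1\cap D_2)=\langle \chi_{D_1},\chi_{D_2}\rangle\le \max\{m_{D_1},m_{D_2}\}-\alpha
\]
for all distinct $D_1,D_2\in\mathcal D$. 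Ordering $\mathcal D=\{D_1,\dots,D_n\}$ by increasing mass $m_1\le\cdots\le m_n$, this reads $\omega(D_j\setminus D_i)\ge\alpha$ for every $i<j$: the larger set always protrudes from the smaller by mass at least $\alpha$. In particular $\alpha\le m_i\le 1-\alpha$ for every interior index, so the relevant masses are already confined to a bounded window.

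The engine is the positive semidefiniteness of the Gram matrix $G_{ij}=\langle\chi_{D_i},\chi_{D_j}\rangle$. Consider first the model case in which all masses equal $m$, and apply Cauchy--Schwarz to $h=\sum_i\chi_{D_i}$ against the constant function $\mathbf 1$ (with $\|\mathbf 1\|_2^2=\omega(V)=1$ and $\langle h,\mathbf 1\rangle=nm$): bounding the off–diagonal entries of $G$ by $m-\alpha$ gives
\[
n^2m^2=\frac{\langle h,\mathbf 1\rangle^2}{\|\mathbf 1\|_2^2}\le\|h\|_2^2=\sum_i m+\sum_{i\ne j}\langle\chi_{D_i},\chi_{D_j}\rangle\le nm+n(n-1)(m-\alpha).
\]
Rearranging yields $n\bigl(\alpha-m(1-m)\bigr)\le\alpha$, and since $m(1-m)\le\tfrac14<\alpha$ this produces the finite bound $n\le \alpha/(\alpha-\tfrac14)$. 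This short computation is the heart of the matter: it explains why the hypothesis $\alpha>1/4$ is precisely what is needed (the bound blows up as $\alpha\downarrow\tfrac14$), and it identifies the correct tool as a second moment of the indicators. Note that at $\alpha=1/3$ it already gives $n\le 4$, so the equal–mass case is far from extremal.

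The \emph{main obstacle} is upgrading this to unequal masses, where the all–ones test vector is too lossy to bound $n$ at all. Decomposing $V$ according to which sets contain each point turns the problem into a linear program in the region–masses $w_T$, $T\subseteq\{1,\dots,n\}$, where $\omega(D_i\cap D_j)=\sum_{T\supseteq\{i,j\}}w_T$ and the constraints become $\sum_{T\supseteq\{i,j\}}w_T\le\max\{m_i,m_j\}-\alpha$. I expect the cleanest route is to run the second–moment inequality with the masses left as free variables and a mass–dependent weighting of the $\chi_{D_i}$ (equivalently, to produce the dual LP certificate), controlling the spread of the masses through the identity $2\sum_{i<j}\max\{m_i,m_j\}=(n-1)\sum_i m_i+\sum_{i<j}|m_i-m_j|$; carrying out this optimization at $\alpha=\tfrac13$ should pin the bound to $c(1/3)=12$. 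The final and genuinely delicate step is to verify sharpness by exhibiting an explicit family of $12$ sets (equivalently, $12$ divisors in one residue class with $S^3>N$) meeting the constraints with equality, so that the optimization value and the construction agree and the constant $12$ is confirmed to be exact.
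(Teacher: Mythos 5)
This theorem is not proved in the paper at all --- it is quoted from Lenstra's original article \cite{DivsInRes} --- so your proposal has to stand on its own, and it does not. The reformulation $\omega(D_1\cap D_2)\le\max\{m_{D_1},m_{D_2}\}-\alpha$ and the equal-mass Cauchy--Schwarz computation are correct, and your bound $n\le\alpha/(\alpha-\tfrac14)$ is a nice observation; but, as you concede, it gives $n\le 4$ at $\alpha=\tfrac13$, while the theorem asserts $c(1/3)=12$. Everything that constitutes the actual content of the statement --- finiteness of $c(\alpha)$ for \emph{arbitrary} masses, and the exact value $12$ --- is exactly what you defer with ``I expect the cleanest route is\dots'' and ``should pin the bound to $c(1/3)=12$.'' That is a restatement of the problem, not a proof, and there is concrete reason to doubt that a single weighted second-moment inequality closes it: the extremal families mix masses wildly (the $12$-set extremal configuration uses masses $0,\tfrac13,\tfrac12,\tfrac23,1$); a Gram-matrix argument is structurally blind to $\emptyset\in\mathcal{D}$, which is legitimately allowed (in the divisor application the divisor $1$ gives $V(1)=\emptyset$) since its indicator is the zero vector; and Lenstra's general bound is $O\left((\alpha-1/4)^{-2}\right)$, a different order of growth than the $(\alpha-1/4)^{-1}$ your equal-mass estimate produces.

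Your final step also misidentifies what sharpness means: exhibiting $12$ sets is \emph{not} equivalent to exhibiting ``$12$ divisors in one residue class with $S^3>N$.'' The set-system problem is a relaxation of the divisor problem; realizing an abstract system by actual divisors is strictly harder, and whether an integer can have even $7$ divisors in a single class with $S^3>N$ is precisely the open problem discussed in Section \ref{BoundsSec} and Section \ref{conc} (the record is Cohen's $6$, from \cite{COHEN6Divs}). What the theorem needs is only an abstract example, and one exists: take $V=\{1,\dots,6\}$ with six atoms of mass $\tfrac16$, and let $\mathcal{D}$ consist of $\emptyset$, $V$, the three blocks $\{1,2\},\{3,4\},\{5,6\}$, their three complements, and the four transversals $\{1,3,5\},\{1,4,6\},\{2,3,6\},\{2,4,5\}$, which pairwise meet in exactly one atom; a direct check shows every pair of these $12$ sets has the larger one protruding from the smaller by exactly $\tfrac13$. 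So the proposal delivers a correct warm-up computation, but both halves of the theorem --- the upper bound for unequal masses and a correctly framed lower-bound construction --- are missing, and the route you sketch for the second half aims at a statement that is currently open rather than at the construction actually required.
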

	To apply Theorem \ref{LenstraCombo} to the case of divisors in residue classes, for each positive integer $N$, Lenstra defines 
	\[
	V(N) := \{p^t: p^t | N, p\text{ prime }, t \geq 1\}
	\]
	with weight function defined via $\omega(\{p^t\}) = \log(p)$. Then for the system of subsets of $V(N)$
	\[
	\mathcal{D} := \{V(d): d|N, d\equiv r \mod S\}
	\]
	if $d_1 = Sx+r$ and $d_2 = Sy+r$ are positive divisors of $N$ with $x > y$ we have 
	\begin{align*}
	\omega\left(V(d_1)-V(d_2)\right) = \log\left(\frac{d_1}{\gcd(d_1,d_2)}\right) &\geq \log\left(\frac{Sx+r}{\gcd(Sx+r,Sy+r)}\right)\\
	& \geq \log\left(\frac{Sx+r}{x-y}\right) \geq \log(S) \geq \alpha \omega(V).
	\end{align*}
	The second inequality follows from the fact that 
	\[
	\gcd(Sx+r,Sy+r) = \gcd(S(x-y),Sy+r) = \gcd(x-y,Sy+r)
	\]
	since $\gcd(S,r) = 1$.
	While a theoretical breakthrough, the upper bound $c(\alpha)$ is not given constructively, and a large portion of \cite{DivsInRes} is dedicated to proving that $c(1/3) = 12$ and that $c(\alpha) = O\left((\alpha-1/4)^{-2}\right)$.
	
	Twentey-three years later, Coppersmith, Howgrave-Grahm, and Nagaraj were able to modify the Coppersmith method of \cite{CopMethod} to give a constructive upper bound, but importantly their model only applies to integers $n$ sufficiently large (depending on $\alpha$), showing that $c(\alpha) = O((\alpha-1/4)^{-3/2})$.
	\begin{Th}[Coppersmith, Howgrave-Grahm, Nagaraj]
		Given $\alpha > 1/4$ there is an integer $n_0 > 0$ such that for all integers
		$n>n_0$ and $s>n^{\alpha}$ and $0 <r<s<n$ with $\gcd(r, s) = \gcd(s, n)=1$, the number
		of divisors $d$ of $n$ with $d \equiv r \mod s$ is bounded above by
		\[
		2+\frac{2\pi\alpha}{(\alpha-1/4)^{3/2}} + \frac{4\alpha}{\alpha-1/4}.
		\]
	\end{Th}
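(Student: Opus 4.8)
\emph{The plan} is to follow the Coppersmith--Howgrave-Graham strategy referenced in \cite{CopMethod}: rather than bounding the number of admissible divisors combinatorially as in Theorem \ref{LenstraCombo}, I would construct a single nonzero integer polynomial $h(x)$ of explicitly controlled degree that vanishes at \emph{every} $x_0$ for which $d = sx_0 + r$ is a divisor of $n$, and then bound the number of such divisors by $\deg h$. It is convenient to keep the paired form used throughout this paper: writing $d = sx + r$ and $n/d = sy + r'$ with $r' \equiv n r^{-1} \bmod s$, each admissible divisor corresponds to a nonnegative integer point $(x,y)$ on the hyperbola $(sx+r)(sy+r') = n$, and the hypotheses $0 < r < s$ and $s > n^{\alpha}$ force $x$ to lie in an interval of length $O(n^{1-\alpha})$. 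The whole difficulty is thereby converted into bounding the number of integer points of this hyperbola lying in a single residue class modulo $s$.

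To build $h$, I would restrict first to divisors $d \le \sqrt{n}$ (the remaining divisors are recovered from the pairing $d \leftrightarrow n/d$, which is what accounts for the additive and multiplicative constants in the final bound). For such $d$ the corresponding $x_0$ all lie in the single interval $[0, X]$ with $X \approx n^{1/2-\alpha}$, so one lattice suffices and no dyadic decomposition — which would cost an unwanted logarithmic factor — is needed. For parameters $k$ and $\omega$ to be chosen I would assemble the shift polynomials $x^{i}(sx+r)^{j}n^{k-j}$ over the appropriate ranges of $i,j$; for any true divisor $d = sx_0+r$ the value of each such polynomial at $x_0$ is divisible by a fixed large modulus $M$ (a power of $n$ governed by the factor size $d \ge n^{\beta}$). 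Passing to the lattice $L$ spanned by the coefficient vectors of these polynomials, scaled by the corresponding powers of $X$, I would invoke Minkowski's theorem (or LLL) to extract a vector of Euclidean norm at most $(\det L)^{1/\omega}$ times a dimension factor. Howgrave-Graham's lemma then guarantees that the associated polynomial $h$, having $\|h\|$ small enough that $|h(x_0)| < M$ while $M \mid h(x_0)$, must satisfy $h(x_0) = 0$ over $\zz$. Since this holds simultaneously for every admissible $x_0$, the count of such divisors is at most $\deg h = \omega - 1$.

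The explicit constant, the threshold $\alpha > 1/4$, and the precise exponents $3/2$ and $1$ would all emerge from optimizing the determinant condition $(\det L)^{1/\omega} < M/\sqrt{\omega}$ as the dimension $\omega \to \infty$. Parametrizing the shift multiplicities by a real density and passing to the continuous limit turns this condition into an inequality whose feasibility is exactly $\alpha > 1/4$ and whose optimized degree bound is the stated expression; the factor $2\pi$ is the signature of Stirling's approximation $\omega! \sim \sqrt{2\pi\omega}\,(\omega/e)^{\omega}$ entering the high-dimensional norm estimate. \emph{The hard part} will be precisely this optimization: one must select the multiplicities so that the determinant-versus-dimension trade-off is simultaneously feasible all the way down to $\alpha = 1/4$ and tight enough to produce the sharp leading constant, and then confirm that the finite-$n$ correction terms are dominated once $n > n_0(\alpha)$ — which is exactly why the theorem is only asymptotic in $n$, in contrast to Lenstra's uniform combinatorial bound $c(\alpha)$. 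The lattice construction and the Howgrave-Graham conversion are otherwise routine; the sharp constant is where all the work lies.
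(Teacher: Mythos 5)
First, a point of context: the paper you are being compared against does not prove this theorem at all --- it is quoted, with attribution, from Coppersmith, Howgrave-Graham and Nagaraj \cite{DivsInResCon}, and the surrounding text only remarks that their proof modifies the Coppersmith lattice method of \cite{CopMethod}. So the only meaningful comparison is with the original CHN argument, whose broad shape --- shift polynomials $x^i(sx+r)^j n^{k-j}$, a Minkowski/LLL short-vector step, and Howgrave-Graham's lemma converting ``divisible by $M$ and smaller than $M$'' into ``equals zero'' --- you have reproduced correctly.

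However, your proposal has a genuine gap at precisely the step you declare routine: the claim that, after reducing to divisors $d \le \sqrt{n}$, ``one lattice suffices and no dyadic decomposition is needed.'' This cannot work, and your own text betrays the problem. The modulus in Howgrave-Graham's lemma is forced to be a power of the unknown divisor itself: at a root $x_0$ with $d = sx_0+r$ dividing $n$, the shift polynomial $x^i(sx+r)^j n^{k-j}$ is guaranteed divisible only by $d^j \cdot d^{k-j} = d^k$, so the usable modulus is $M = d^k$, which varies over the whole range $r \le d \le \sqrt{n}$; there is no uniform lower bound on $d$ beyond the trivial one, hence no single ``fixed large modulus $M$'' (your parenthetical ``governed by the factor size $d \ge n^{\beta}$'' quietly reintroduces exactly the stratification you claim to avoid). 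Taking the worst-case $M$ makes the norm condition $|h(x_0)| < M$ vacuous. What CHN actually do is partition the divisors into size classes $n^{\beta_i} \le d < n^{\beta_{i+1}}$ and build a separate lattice for each class; the feasibility condition at size $n^{\beta}$ is $\beta^2 - \beta + \alpha > 0$, which for $\alpha > 1/4$ holds for every $\beta$ but degenerates as $\beta \to 1/2$, and the art is that both the number of classes and the degree needed per class depend only on $\alpha$ --- which is how a $\log n$ factor is avoided, not by collapsing to one lattice. This summation over classes is also where the stated constant really comes from: the $\pi$ is the value of an arctangent-type integral of the shape $\int \bigl((\beta-\tfrac12)^2 + (\alpha - \tfrac14)\bigr)^{-1}\,d\beta$ arising when the per-class counts are summed, and the exponent $3/2$ comes from combining the per-class degree with the number of classes. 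It cannot come from Stirling's formula as you assert: dimension factors of that kind enter only as multiplicative losses in the short-vector bound, and any such loss is absorbed by enlarging $n_0(\alpha)$, never into the leading count. Since the optimization you defer as ``the hard part'' is set up on the wrong footing (one lattice, Stirling), the proposal as written would not yield the stated bound.
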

	While theoretically an improvement, the authors note that the upper bounds they obtain are often more than double the select values calculated by Lenstra. See table $1$ in \cite{DivsInResCon}. The authors believed that this discrepancy was due to the fact that their method counts both positive and negative integers $d$ such that $d \equiv r \mod s$. Said another way, it counts positive divisors of $n$ congruent to $\pm r \mod s$. 
	
	It is an interesting question to determine if Lenstra's combinatorial model can be used to count all divisors (positive and negative) in a single residue class, or more generally, in the number rings $\mathcal{O}_k$ of Theorem \ref{ImagQuadTh}. We offer a quick proposition as a partial solution.
	\begin{Prop}\label{PMDivs}
		For all $\epsilon >0$, there exists $N_\epsilon$ so that for all $N > N_\epsilon$ if $S,r$ are any positive integers with $S^{\alpha + \epsilon} > N$ and $\gcd(S,N) = \gcd(S,r) = 1$ then $N$ has at most $c(\alpha)$ positive and negative divisors congruent to $r \mod S$ where $c(\alpha)$ is a constant that depends only on $\alpha$.
	\end{Prop}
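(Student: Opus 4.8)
The plan is to run Lenstra's combinatorial machinery of Theorem \ref{LenstraCombo} on an enlarged family of divisors. First I would reduce the two-sided count to a one-sided one: since $N>0$, a negative divisor $-d$ (with $d>0$) satisfies $-d\equiv r\bmod S$ exactly when $d\equiv -r\bmod S$, so the positive and negative divisors of $N$ congruent to $r$ are in bijection with the positive divisors congruent to $r$ together with those congruent to $-r$. The hypothesis forces $S\to\infty$ with $N$, so for $N$ large the classes $r$ and $-r$ are distinct mod $S$ and no divisor is double counted. Accordingly I keep Lenstra's set $V(N)$ and weight $\omega$ unchanged and apply Theorem \ref{LenstraCombo} to the single system
\[
\mathcal{D}:=\{V(d): d\mid N,\ d>0,\ d\equiv r\text{ or } d\equiv -r \bmod S\}.
\]
Since $d\mapsto V(d)$ is injective, a bound $|\mathcal{D}|\le c(\alpha)$ is exactly the desired conclusion.

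The work is to verify the separation hypothesis $\max\{\omega(V(d_1)-V(d_2)),\omega(V(d_2)-V(d_1))\}\ge \alpha\,\omega(V)$ for all distinct $V(d_1),V(d_2)\in\mathcal{D}$. For pairs lying in the \emph{same} class this is Lenstra's computation verbatim (the estimate is insensitive to replacing $r$ by $-r$, since $\gcd(S,-r)=\gcd(S,r)=1$), and yields a separation of at least $\log S\ge\alpha\,\omega(V)$, needing no $\epsilon$. The genuinely new case is an \emph{opposite}-class pair, say $d_1=Sx+r$ and $d_2=Sy-r$ with $x\ge 0$, $y\ge 1$, where $0<r<S$. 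Here I would use the sum in place of the difference: writing $g=\gcd(d_1,d_2)$, from $\gcd(g,S)=\gcd(d_1,S)=\gcd(r,S)=1$ and $g\mid d_1+d_2=S(x+y)$ one gets $g\mid x+y$, hence $g\le x+y$. Combining this with $S(x+y)=d_1+d_2\le 2\max(d_1,d_2)$ gives $\max(d_1,d_2)/g\ge S/2$, so the larger of $\omega(V(d_1)-V(d_2))$ and $\omega(V(d_2)-V(d_1))$, namely $\log\!\left(\max(d_1,d_2)/g\right)$, is at least $\log S-\log 2$.

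The final step absorbs this additive loss using $\epsilon$ and $N_\epsilon$. With $\omega(V)=\log N$, and mirroring the displayed estimate $\log S\ge\alpha\,\omega(V)$ that Lenstra uses in the positive case, the strengthened hypothesis here supplies $\log S\ge(\alpha+\epsilon)\,\omega(V)$, a surplus of $\epsilon\log N$ over the threshold $\alpha\log N$. Taking $N_\epsilon:=2^{1/\epsilon}$ makes this surplus exceed $\log 2$ for all $N>N_\epsilon$, so even in the opposite-class case the separation $\log S-\log 2$ is still at least $\alpha\log N=\alpha\,\omega(V)$. Thus the hypothesis of Theorem \ref{LenstraCombo} holds for every pair in $\mathcal{D}$, and (as $\alpha>1/4$) that theorem returns $|\mathcal{D}|\le c(\alpha)$, proving the proposition. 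I expect the opposite-class pair — replacing Lenstra's $d_1-d_2=S(x-y)$ by $d_1+d_2=S(x+y)$ and controlling the resulting factor of $2$ — to be the only real obstacle; everything else is a transcription of the positive-divisor argument, with the $\epsilon$-margin engineered precisely to swallow the constant $\log 2$.
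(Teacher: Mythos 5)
Your proposal is correct, and it reaches the same conclusion as the paper by a genuinely different decomposition. The paper keeps a \emph{single} residue class but lets the divisors themselves be signed: it enlarges the ground set to $V=V(-N)$ by adjoining a formal element $-1$ of weight zero (needed so that $d$ and $-d$ have distinct images under $V$), normalizes $|r|\leq S/2$, and bounds $\gcd(d_1,d_2)$ through the difference $d_1-d_2=S(x-y)$, where now $x,y$ may be negative and $|x-y|\leq 2|x|$. You instead fold the sign into the residue class: Lenstra's original model is untouched, the system consists of positive divisors in the two classes $\pm r \bmod S$, injectivity of $d\mapsto V(d)$ is automatic, and the only new work is the cross-class pair, which you handle by applying the same coprimality-with-$S$ trick to the sum $d_1+d_2=S(x+y)$ rather than the difference. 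Both routes lose exactly one factor of $2$ (your cross-class pairs correspond precisely to the paper's opposite-sign pairs, where $|x-y|=|x|+|y|$) and absorb it with the identical choice $N_\epsilon=2^{1/\epsilon}$. Your bookkeeping is in fact tighter: the bound $\max(d_1,d_2)/\gcd(d_1,d_2)\geq S/2$ genuinely yields a separation of $\log S-\log 2$, whereas the paper's displayed chain silently drops a factor of $2$ (one has $\log\bigl(\tfrac{S|x|-S/2}{2|x|}\bigr)=\log S-\log 2+\log\tfrac{|x|-1/2}{|x|}\geq \log S-\log 4$, so as written its argument really needs $N_\epsilon=4^{1/\epsilon}$). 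What the paper's formulation buys in exchange is that adjoining weight-zero elements for units extends verbatim to the rings $\mathcal{O}_K$ of Theorem \ref{ImagQuadTh}, as remarked after the proposition; in your formulation that extension would instead use one class $ur\bmod S$ per unit $u$, which is still finite for imaginary quadratic fields, so both versions generalize.
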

	\begin{proof}
		Fix $\epsilon >0$, set $N_\epsilon = 2^{1/\epsilon}$ and suppose that $S,r,N$ satisfy the hypotheses of Proposition \ref{PMDivs}. As before, for positive integers $d$ let \[
		V(d) := \{p^t: p^t| N, p\text{ prime}, t\geq 1\}
		\]
		and for negative integers $d$, define 
		\[
		V(d) = \{-1\} \cup V(|d|).
		\]
		Using the notation of Theorem \ref{LenstraCombo}, let $V = V(-N)$, and \[
		\mathcal{D} := \{V(d): d\in\zz, d|N, d\equiv r \mod S\}.
		\]
		We also define $\omega(\{p^t\}) = \log(p)$ and $\omega\{-1\} = 0$ and now show that the hypotheses of Theorem \ref{LenstraCombo} are met by this system.

		Let $d_1 = Sx+r$ and $d_2 = Sy+r$ and without loss of generality we assume that $|x| \geq |y|$ and that if equality holds, then $x > 0$, and by the division algorithm, we may also assume that $|r| \leq S/2$. We calculate directly that 
		\begin{align*}
		\omega\left(V(d_1)-V(d_2)\right) &= \log\left( \frac{|d_1|}{\gcd(d_1,d_2)}\right) \geq \log\left(\frac{S|x|-S/2}{2|x|} \right) \\
		&\geq\log(S) +\log\left(\frac{|x|-1/2}{|x|}\right)\geq \log(S) -\log(2).
		\end{align*}
		Since $S^{\alpha+\epsilon} \geq N$ we have that 
		\[
		\log(S)-\log(2) \geq (\alpha+\epsilon)\log(N)-\log(2) \geq \alpha\log(N) = \omega(V).
		\]
		The penultimate equality follows from the fact that 
		\[
		\epsilon\log(N) \geq \epsilon\log(N_\epsilon) \geq \log(2)
		\]
		by construction. Thus by Theorem \ref{LenstraCombo} we have that $|\mathcal{D}| \leq c(\alpha)$ as desired.
	\end{proof}
	Before continuing onto lower bounds, we offer two quick comments. First, Proposition \ref{PMDivs} shows that the rough doubling of the bounds given by Coppersmith et al. is not due to counting both positive and negative divisors since a minor alteration to Lenstra's model can account for both positive and negative divisors. 
	Secondly, Proposition \ref{PMDivs} can be easily modified to create an upper bound on the number of solutions to $(Sx+r)(Sy+r) = N$ in $\mathcal{O}_K$ for the number fields of Theorem \ref{ImagQuadTh}, although $N_\epsilon$ grows to $\left(32+8\sqrt{15}\right)^{1/\epsilon}$ for field $K = \qq(\sqrt{-11})$.
	\subsection{Lower Bounds}
	In addition to upper bounds, some work has gone into establishing, via example, lower bounds for the maximum number of divisors a rational integer $N$ can have in a single residue class $r \mod S$, with $(N,S) = (S,r) = 1$, and $S > N^{\alpha}$ for a fixed $\alpha$. For example, the case of $\alpha = \frac{1}{2}$ is completely known, as the Lenstra upper bound $c(1/2) = 2$ is achieved by infinitely many tuples $(N,S,r)$. In fact, for $n\geq 3$ we have that for $(N,S,r) = (N,N-1,1)$ that $N$ has two divisors (namely $1$ and $N$) that are both congruent to $1 \mod N-1$, and $N-1 > N^{1/2}$.
	
	The case is much less clear when $\alpha$ is closer to $1/4$, the threshold of both the Lenstra and Coppersmith et al. methods with significant work only invested in the case $\alpha = 1/3$. 
	In 1982 Henri Cohen showed in \cite{COHEN6Divs} that for any $\ell \geq 3$ that for 
	\begin{align*}
	N(\ell) &= (2\ell+1)(\ell^2+1)(\ell^2+\ell+1)(2\ell^2-\ell+1)(2\ell^2+\ell+1)\\
	S(\ell) &= 2\ell^3+\ell^2+2\ell
	\end{align*}
	that $S(\ell)^3 > N(\ell)$ and that $N(\ell)$ has exactly $6$ divisors congruent to $1\mod S(\ell)$. Note that since $S(\ell)^3$ and $N(\ell)$ have the same leading term, that 
	\[
	\alpha(\ell) := \log_{N(\ell)}(S(\ell))
	\]
	rapidly approaches (from above) the limit $1/3$.
	
	For $\alpha \geq 1/3$ It's currently an open problem as to whether $6$ is the best possible, although there does seem to be some hope that $7$ (or more) divisors may be obtainable. 
	For example, if $ N = 104254876089000$, and $S = 105787$, then $N$ has $6$ divisors congruent to $1$ mod $S$, and $\alpha = \log_N(S) \approx 0.3584$ which is significantly larger than $1/3$. This might suggest that there is enough room to add an additional divisor and maintain the threshold of $\alpha > 1/3$.
	
	Additionally, for
	\begin{align*}
	N(x) &= (x + 2)  (x + 1)^2 (x^2 + x + 1) (x^2 + x + 2) (x^2 + 2x + 2)\\
	S(x) &= x^3 + 3x^2 + 4x + 3
	\end{align*}
	the integers $N(x)$ have $7$ divisors congruent to $1 \mod S(x)$ among both positive and negative divisors for any $x \geq 2$.
	These examples of $(N,S,r)$ are new discoveries and do not belong to Cohen's infinite family.
	\section{Summary and Future Work}\label{conc}
	In this work we generalized Lenstra's classical algorithm for finding divisors in residue classes to several imaginary quadratic rings as well as the polynomial ring $\zz[x]$. 
	This required a new approach as Lenstra's original method critically relied on the total ordering of the integers and on a critical sign change that do not generalize to higher rings of integers. 
	We proved that our generalization runs in polynomial time in $\log(|N|)$ and implemented our method providing numerical results.
	
	In concluding, we offer three questions of future research:
	\begin{enumerate}
		\item Can a generalization of our new method be applied to other Euclidean rings with a non-finite unit group? For example in the ring $\zz[\sqrt{2}]$ for any element $\alpha \in \zz[\sqrt{2}]$ there is a unique (up to sign) associate of $\alpha = u\alpha'$ for some unit $u$ and $\alpha'\in\zz[\sqrt{2}]$ such that $1\leq |\alpha'|<1+\sqrt{2}$. 
		Can this allow us to prove a version of Lemma \ref{SmallTerm} for the ring $\zz[\sqrt{2}]$?
		\item Because the discrepancy between the Lenstra and Coppersmith et al. upper bounds for the number of divisors in a single residue class is not due to counting both positive and negative divisors, why does the Lenstra method produce such significantly smaller upper bounds?
		\item Can new examples of integers with many divisors in a single residue class? Is Lenstra's upper bound sharp for values of $\alpha < 1/2$? Specifically, can an integers $N,S,r$ be found with $\gcd(N,S) = \gcd(S,r) = 1$ and $S^3 > N$ so that $N$ has at least $7$ divisors congruent to $r \mod S$?
	\end{enumerate}
	\bibliographystyle{plain}
	\bibliography{Divisors_in_Residue_Classes_Revisited}
\end{document}